\renewcommand{\paragraph}[1]{%
  \par\medskip
  \noindent\textbf{#1}.
}
\title[Implicit representations and prequantum structure]{Implicit representations of codimension-2 submanifolds and their prequantum structure}  
\author{Anonymous author(s)}
\title{Implicit representations of codimension-2 submanifolds and their prequantum structure}
\renewcommand\thefootnote{\textsuperscript{\@fnsymbol\c@footnote}}
\let\old@thanks\thanks 
\DeclareRobustCommand\thanks[2][]{
  \AddToHook{begindocument/end}{
    \if\relax#1\relax%
      \footnotemark%
    \else%
      \protect\refstepcounter{footnote}\protect\label{#1}%
    \fi%
    \protected@xdef\@thanks{%
      \@thanks\protect\footnotetext[\the\c@footnote]{#2}%
    }%
  }%
}
\let\old@maketitle\maketitle
\def\maketitle{\old@maketitle\def\thefootnote{\@arabic\c@footnote}}
\author{Albert Chern\ref{a} and  Sadashige Ishida\ref{b}}
\thanks[a]{Email: alchern@ucsd.edu \\ \hspace*{1.5em} University of California San Diego, California, USA}
\thanks[b]{Corresponding author\\ \hspace*{1.5em} 
Email: sadashige.ishida@ist.ac.at \\ \hspace*{1.5em} Institute of Science and Technology Austria, Klosterneuburg, Austria}
\newtheorem{theorem}{Theorem}[section]
\newcommand{\addtheorem}[2]{%
  \newaliascnt{#1}{theorem}%
  \newtheorem{#1}[#1]{#2}%
  \aliascntresetthe{#1}%
\expandafter\newcommand\csname #1autorefname\endcsname{#2}%

}
\theoremstyle{definition}
\numberwithin{equation}{section}
\definecolor{darkblue}{rgb}{0.0, 0.0, 0.55}
\definecolor{revcolor}{rgb}{0.0, 0.3, 0.8}
\newcommand{\on}[1]{\operatorname{#1}}
\def\cf{\emph{cf.}}
\def\ie{\emph{i.e.}}
\def\CC{\mathbb{C}}
\def\RR{\mathbb{R}}
\def\SS{\mathbb{S}}
\def\TT{\mathbb{T}}
\def\ZZ{\mathbb{Z}}
\def\cF{\mathcal{F}}
\def\cL{\mathcal{L}}
\def\cO{\mathcal{O}}
\def\cP{\mathcal{P}}
\def\cS{\mathcal{S}}
\def\bn{\mathbf{n}}
\def\bv{\mathbf{v}}
\def\bx{\mathbf{x}}
\def\Th{\Theta}
\def\Om{\Omega}
\newcommand{\id}{\mathrm{id}}
\newcommand{\Emb}{\mathrm{Emb}}
\newcommand{\UEmb}{\mathrm{UEmb}}
\DeclareSymbolFont{bbold}{U}{bbold}{m}{n}
\DeclareSymbolFontAlphabet{\mathbbold}{bbold}
\newcommand{\ii}{\mkern1.5mu\mathbbold{i}\mkern1.5mu}
\renewcommand{\Im}{\operatorname{Im}}
\def\det{\operatorname{det}}
\def\im {\operatorname{im}}
\def\ker{\operatorname{ker}}
\def\id{\operatorname{id}}
\DeclareMathOperator{\Diff}{Diff}
\DeclareMathOperator{\SDiff}{SDiff}
\DeclareMathOperator{\Diffp}{Diff^+}
\DeclareMathOperator{\diff}{diff}
\DeclareMathOperator{\sdiff}{sdiff}
\let \DC \SemiD
\let \AlgDC \AlgSemiD
\DeclareMathOperator{\Gr}{\on{Gr}}
\newcommand{\Exp}{\on{Exp}}
\def\paramgam{{\tilde{\gamma}}}
\let\on=\operatorname
\begin{document}


\date{\today}












\maketitle


\begin{abstract}
This paper explores the geometry of the space of codimension-2 submanifolds. We implicitly represent these submanifolds by a class of complex-valued functions. 
We show that the space of all these implicit representations admit a prequantum bundle structure over the space of submanifolds, equipped with the well-known Marsden–Weinstein symplectic structure. 
This bundle allows a new geometric interpretation of the Marsden–Weinstein structure as the curvature of a connection form, which measures the average of volumes swept by the deformation of the $\SS^1$-family of hypersurfaces, defined as the phase level sets of the complex function implicitly representing a submanifold. 
\medskip

\noindent \emph{Keywords}. Symplectic structure, space of submanifolds, prequantum bundle 

\medskip

\noindent \emph{2020 Mathematics Subject Classification}. 53D05, 58D10, 58A25, 58B25

\end{abstract}

\tableofcontents

\section{Introduction}
The shape space of closed (or exact) codiemnsion-2 submanifolds in a manifold \(M\) is a facsinating space. A primary example where such a space arises is hydrodynamics: co\-dimension-2 submanifolds represent singular vortices, namely point vortices in 2D and vortex filaments in 3D. 
With this example in mind, there is a natural symplectic structure defined for the space of closed codiemension-2 submanifolds in Hamiltonian hydrodynamics.  This is the so-called  \emph{Marsden--Weinstein} (MW) symplectic structure \cite{MarsdenWeinstein1983coadjoint,Khesin2012vortexMembrane}.
Although it was originally introduced in the context of fluid dynamics, the MW symplectic structure is purely geometric.  It is canonically defined whenever the ambient manifold \(M\) is equipped with a volume form.  The MW structure plays crucial roles beyond fluid dynamics, giving rise to other dynamical systems and integrable systems of space curves \cite{HallerVizman2003NonlinearGA,chern_knoeppel_pedit_pinkall_2020}.  


When it comes to a symplectic manifold \((\Sigma,\sigma)\) (\ie\@ a manifold \(\Sigma\) endowed with a closed non-degenerate 2-form \(\sigma\)), an interesting question is whether the closed form \(\sigma\) is also exact; that is, whether \(\sigma = d\vartheta\) for some \emph{symplectic potential} 1-form \(\vartheta\).  For example, the symplectic structure of the cotangent bundle \(\Sigma = \{(q,p)\in T^*Q\}\) of any manifold \(Q\) is defined by \(\sigma = d\vartheta\), where \(\vartheta = \langle p|dq\rangle\), and therefore it is exact.  For non-example, the area form \(\sigma\) on a sphere \(\Sigma = \SS^2\) is a non-exact symplectic form.  
A more relaxed condition for the exactness is that the symplectic form is the \emph{curvature} of some \emph{connection} of a \emph{circle bundle}. This is known as the following \emph{prequantum structure}.

\begin{definition}
    A \emph{prequantum bundle} \((P,\vartheta)\)  over a symplectic manifold \((\Sigma,\sigma)\) consists of a principal \(\SS^1\)-bundle \(\pi\colon P\to \Sigma\) and a connection form \(\vartheta\in\Omega^1(P)\) so that \(\pi^*\sigma = d\vartheta\).
\end{definition}

For example, the \emph{Hopf fibration} \(\SS^3\to\SS^2\) with a suitable 1-form \(\vartheta\) yields a prequantum bundle for the non-exact area form on \(\SS^2\).  All symplectic manifolds with symplectic potential 
admit a prequantum structure.

Note that the connection \(\vartheta\), which is classically defined as a Lie algebra-valued 1-form, is here represented by a real-valued 1-form.  This is because the structural group \(\SS^1\cong U(1)\) is one-dimensional with a Lie algebra identified with \(\RR\).  Following this note, one may generalize the notion of a prequantum bundle by replacing \(\SS^1\)-bundles more general principal \(G\)-bundles as long as the Lie group \(G\) is Ablian and one-dimensional.    

\begin{definition}\label{def:GeneralizedPrequantumBundle}
    A \emph{generalized prequantum bundle} \((P,\vartheta)\) over a symplectic manifold \((\Sigma,\sigma)\) is a principal \(G\)-bundle \(\pi\colon P\to\Sigma\) with structural group \(G = \SS^1\times H\) for some discrete Abelian group \(H\), together with a connection form \(\vartheta\in\Omega^1(P)\) so that \(\pi^*\sigma = d\vartheta\).
\end{definition}

Constructing a prequantum bundle of a symplectic manifold is the first step in \emph{geometric quantization}, a recipe that turns a classical mechanical system with phase space \((\Sigma,\sigma)\) into a quantum mechanical system for which the wavefunctions are sections of the complex line bundle associated with \(P\).  Having a prequantum structure is important not only for quantization.  The mere fact that a prequantum structure realizes the symplectic form as a curvature brings geometric interpretations to the values of the symplectic form.  For example, they correspond to the geometric phases or holonomies of some geometric process that represents parallel transports with respect to the connection.

Let us return to the Marsden--Weinstein symplectic space of codimension-2 submanifolds.  Is the MW form exact?  If not, does it have a prequantum structure?  Is it the geometric phase of a geometric process?

It is known that the MW form is exact when $\dim M =3$ and the volume form of $M$ is exact \cite{Tabachnikov2017bicycle,Padilla2019bubbleRings, brylinski2009loop}. We extend this result to arbitrary dimension (\autoref{thm:SymplecticPotentialForExactVolumeForm}). However, the volume form is no longer exact when \(M\) is a closed manifold.  

For a closed \(M\), Haller and Vizman \cite{HallerVizman2003NonlinearGA} show that the MW structure is prequantizable if and only if the total volume of \(M\) is an integer.
Their construction of a prequantum bundle is abstractly gluing together local symplectic potentials on a collection of open sets of the shape space. Such a construction does not directly tell us a geometric picture of how the MW form arises as the geometric phase of a concrete geometric process.
A more recent work \cite{Diez2020InducedDC} shows the existence of prequantum circle bundles for MW form with additional control over global monodromy data or differential characters.  The result is also based on general topological machinery that generates isomorphism classes of circle bundles given curvature and monodromy, rather than an explicit construction of the prequantum bundle.

\subsection{Main Results}\label{sec:MainResults}
In this article, we present an explicit construction of a prequantum bundle for the MW symplectic structure with an intuitive geometric interpretation.

A key step is to represent codimension-2 submanifolds \emph{implicitly}.  Similar to the implicit representation of codimension-1 hypersurfaces by level set functions, codimension-2 submanifolds can be expressed as the zero sets of complex-valued functions.
This implicit representation has been utilized in computing vortex filaments \cite{weissmann2014smoke}, vortex dynamics \cite{iwc2022implicit_filaments}, curve-shortening flow \cite{MBO2001diffusion}, and quantum vortex filaments governed by the Gross--Pitaevskii equation \cite{ogawa2002study,villois2016GrossPitaevskii,jerrard2018leapfrogging}.
Note that such implicit representation restricts the class of codimension-2 submanifolds to those that are homologically exact, meaning that they must bound codimension-1 hypersurfaces.  Nevertheless, exact codimension-2 submanifolds cover all cases of singular vortices in fluid dynamics, which are the main subjects of study with the Marsden--Weinstein structure.  In order to employ implicit representations, we focus only on exact codimension-2 submanifolds.  

Let \(M\) be an oriented manifold equipped with a volume form \(\mu\), which one may normalize so that \(\int_M\mu = 1\). Let \(\cO\) be the space of exact codimension-2 submanifolds in \(M\), represented by embeddings modulo reparameterizations.  We call \(\cO\) the \emph{explicit shape space},%
\footnote{More precisely, \(\cO\) is a connected component of the space of exact codimension-2 submanifolds. }
which is endowed with the MW sympelctic form \(\omega^{\rm MW}\in\Omega^2(\cO)\).
The implicit representation of each element in \(\cO\) is not unique, as multiple complex-valued functions can share the same zero set. This non-uniqueness makes the \emph{implicit shape space} \(\cF_\cO\) a fiber bundle over the explicit shape space \(\cO\).
%
\footnote{Similar to \(\cO\), the space \(\cF_\cO\) is a subset of the space of complex functions, representing elements of $\cO$.}

Each element \(\psi\in\cF_\cO\) is a complex-valued function over \(M\), while each element \(\gamma\in\cO\) is a codimension-2 submanifold in \(M\). The projection from \(\cF_\cO\) to \(\cO\) is to extract the zero set.

This bundle has a natural geometric interpretation.
Each complex function $\psi\in\cF_\cO$ representing a codimension-2 submanifold $\gamma\in\cO$ carries additional information in the form of its complex phase. The level sets of this phase function constitute an $\SS^1$-family of hypersurfaces in \(M\) that are bordered by $\gamma$.
Any motion of this configuration leads to these hypersurfaces moving across the space, from which one can measure the average volume swept out by these hypersurfaces.
In each fiber of \(\cF_\cO\), we define two hypersurface configurations as equivalent (\(\sim\)) if they can be continuously deformed into each other while keeping their boundaries \(\gamma\) fixed and maintaining zero net volume change throughout the motion.  
The space of equivalence classes is denoted by \(\cP\coloneqq\cF_\cO/\sim\), which is still a fiber bundle over \(\cO\).
We call \(\cP\) the \emph{volume bundle} for the implicit shape space.

It turns out that the volume bundle $\cP$ over $\cO$ is a \emph{generalized prequantum bundle} (\autoref{def:GeneralizedPrequantumBundle}) with each fiber being \(\SS^1\times H^1(M; \ZZ)\),
where the latter is the first cohomoology of $M$.
The connection form $\Th_\cP$ on this bundle is defined such that the horizontal lift of any path in $\cO$ corresponds to a motion of phase hypersurfaces that sweep out zero net volume along the path. With this setup, we have the following theorem.

\begin{theorem}[\autoref{th:generalized_prequantum_bundle} and \autoref{cor:prequantum_circle_bundle}]\label{th:informal:prequantum_circle_bundle}
    The fibration $\Pi_\cP\colon(\cP,\Th_\cP)\to (\cO,\omega^{\rm MW})$ is a generalized prequantum bundle
    with the structure group $G=\SS^1\times H^1(M;\ZZ)$.  
    In particular, the volumne bundle is a prequantum circle bundle for simply-connected \(M\).
\end{theorem}


In other words, the curvature of the connection \(\Theta_{\cP}\) agrees with the MW symplectic form.
As a consequence, this prequantum bundle allows the following geometric interpretation of the MW form:


\begin{corollary}[\autoref{cor:average_swept_volume}]\label{cor:intro_average_swept_volume}
Consider a closed path \(\partial D\) in \(\cO\) that bounds a 2-dimensional disk \(D\), representing a periodic motion of a codimension-2 submanifold \(\gamma_t \subset M\) for \(0 \leq t \leq 1\), with \(\gamma_0 = \gamma_1\). Let $[\psi_t]_\cP$ be a horizontal lift in \(\cP\) of $\gamma_t$ and  $\phi_t\coloneqq \psi_t/|\psi_t|\colon M\setminus \psi^{-1}_t(0) \to \SS^1$ be the phase map of a representative $\psi_t\in [\psi_t]_\cP$. Then \(\gamma_t\) bounds a family of hypersurfaces \(\{\sigma_t^s\}_{s\in \SS^1}\), defined by $\sigma_t^s = \phi_t^{-1}(s)$.
Assume that the average volume swept out by \(\sigma_t^s\) remains zero at each \(t\) \ie,
\begin{align}
    \int_{\SS^1}\int_{\sigma_t^s}\iota_{\partial_t{\sigma}_t^s} \mu \ ds = 0, 
\end{align}
where \(\mu\) is the volume form on \(M\).
Then, the volume enclosed between \(\sigma_0^s\) and \(\sigma_1^s\), averaged over \(s\in \SS^1\), equals to \(\iint_{D} \omega^{\rm MW}\).
\end{corollary}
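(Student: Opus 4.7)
The plan is to recognize the statement as the holonomy identity for the prequantum connection asserted in \autoref{th:informal:prequantum_circle_bundle}. I would first observe that the hypothesis
\[
\oneOverTwoPi\int_{\SS^1}\int_{\sigma_t^s}\iota_{\partial_t\sigma_t^s}\mu\,ds = 0
\]
is, by the very construction of the fiber bundle $\cP=\cF_\cO/\!\sim$, exactly the horizontality condition $\Th_\cP(\partial_t[\psi_t]_\cP)=0$. Hence $t\mapsto[\psi_t]_\cP$ is the horizontal lift of the closed base path $t\mapsto\gamma_t$ starting at $[\psi_0]_\cP$, and the problem reduces to identifying the fiber-valued ``gap'' between $[\psi_0]_\cP$ and $[\psi_1]_\cP$ with the $s$-averaged enclosed volume.

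Next I would apply the standard consequence of a prequantum structure: since \autoref{th:informal:prequantum_circle_bundle} identifies the curvature of $\Th_\cP$ with the Marsden--Weinstein form $\omega$, Stokes' theorem applied to the pullback of $\Th_\cP$ along any smooth filling section of $\Sigma$ yields that the holonomy of the horizontal lift around $\partial\Sigma$ equals $\iint_\Sigma\omega$, taken in the Lie algebra $\RR$ of the identity component $\SS^1$ of the structure group $\SS^1\times H^1_{dR}(M,\ZZ)$, i.e.\ modulo the lattice of periods.

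The third step is the geometric interpretation of that holonomy. Two representatives in the same fiber over a fixed $\gamma$ correspond to two $\SS^1$-families of hypersurfaces $\{\sigma_0^s\}$ and $\{\sigma_1^s\}$ sharing the common boundary $\gamma$; the relation $\sim$ declares equivalent any homotopy with vanishing total swept volume, so the $\SS^1$-coordinate of the fiber records precisely the $s$-averaged signed volume enclosed between the two families, modulo the periods in $H^1_{dR}(M,\ZZ)$ arising from non-trivial cycles of $M$. Combining this identification with the holonomy computation of the previous paragraph gives the claimed equality.

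The step I expect to be the main obstacle is the careful bookkeeping between the $\oneOverTwoPi$-normalization used to define $\Th_\cP$ and the normalization of $\omega$ recalled earlier, so that the holonomy equals $\iint_\Sigma\omega$ with no stray factor; a secondary subtlety is the $H^1_{dR}(M,\ZZ)$-component of the structure group, where one must confirm that both sides represent the same real number modulo periods. Both are routine calculations once the explicit formulas for $\Th_\cP$ and $\omega$ from the preceding sections are at hand.
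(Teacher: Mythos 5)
Your proposal is correct and follows essentially the same route the paper intends: the paper derives this corollary directly from \autoref{th:prequantum_circle_bundle} together with the interpretation of \(\Th\) as average swept volume (\autoref{lem:evaluation_of_Theta}, \autoref{cor:Liouville_form_flux}, \autoref{rmk:meaning_of_Th}), i.e.\ horizontality \(=\) zero average swept volume, Stokes on a filling of \(\Sigma\) gives holonomy \(=\iint_\Sigma\omega\), and the fiber displacement is measured by the averaged enclosed volume. Your caveat that the enclosed volume is only determined modulo the periods coming from \(\Vol(M)\) and \(H^1_{dR}(M,\ZZ)\) is a reasonable sharpening of the paper's informal statement rather than a deviation from it.
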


In the limiting case of \autoref{cor:intro_average_swept_volume}, where the phase of $\psi_t$ becomes constant except a \(2\pi\) jump at on a single hypersurface \(\sigma_t\) bounding \(\gamma_t\), the result simplifies to the following corollary. This version has no explicit reference to the complex function $\psi_t$.

\begin{corollary}[\autoref{cor:single_surface_swept_volume}]
Let \(\{\gamma_t\}_{t \in [0,1]}\) be a path along \(\partial D\)  for some 2-dimensional disk \(D\) in $\cO$ as in \autoref{cor:intro_average_swept_volume}. Suppose that each \(\gamma_t\) bounds a hypersurface, i.e., \(\gamma_t = \partial \sigma_t\), and that the volume swept out by \(\sigma_t\) remains zero at each \(t\).
Then \(\iint_{D} \omega^{\rm MW}\) equals the volume enclosed between \(\sigma_0\) and \(\sigma_1\).
\end{corollary}

\ifdefined\TransAMS
\else
\paragraph{Acknowledgement}
We thank Chris Wojtan for his continuous support to the project through a number of discussions. We also thank Ioana Ciuclea, Bas Janssens, and Cornelia Vizman for discussions on prequantum bundles over nonlinear Grassmannians. The second author warmly acknowledges the hospitality of the University of California, San Diego, where part of this research was carried out during his visit. 

This project was funded in part by the European Research Council (ERC Consolidator Grant 101045083 CoDiNA) and the National Science Foundation CAREER Award 2239062. Some figures in the article were generated by the software Houdini and its education license was provided by SideFX.
\fi
\section{Preliminary}
We review the shape spaces of submanifolds and the Marsden--Weinstein (MW) structure on the codimension-2 shape space.  

Throughout, we let \(M\) be an \(m\)-dimensional oriented manifold equipped with a volume form \(\mu\in\Omega^m(M)\). 
Let \(\Diff(M)\) be the diffeomorphism group on \(M\), and \(\SDiff(M)\) be the volume-preserving diffeomorphism group on \(M\).  Let \(\Diff_0(M)\subseteq\Diff(M)\) denote the connected component that contains the identity.

\subsection{Space of Unparameterized Embeddings}
Let \(S\) be a \(k\)-dimensional compact, oriented manifold, with \(0\leq k\leq m\).  In the case of \(k = 0\), \(S\) is a finite set of oriented points. The space of smooth embeddings of \(S\) in \(M\),
\begin{align}
    \on{Emb}(S; M) \coloneqq \{ \paramgam \in C^\infty(S; M) \mid \on{rank}(d\paramgam) = k,\ \paramgam(s) = \paramgam(s') \implies s = s' \},
\end{align}
is an infinite dimensional manifold with the \(C^\infty\) Fr\'echet topology \cite{bauerBruverisMichor2014overview, Michor2019ManifoldsOM}.
Its tangent space at each \( \paramgam \in \on{Emb}(S, M) \) is given by the space of sections of the pullback tangent bundle:
\(
    T_\paramgam \on{Emb}(S, M) \cong \Gamma(\paramgam^* TM).
\)
That is, a tangent vector \( \dot\paramgam \in T_\paramgam \on{Emb}(S, M) \) assigns to each point \( s \in S \) a vector \( \dot\paramgam(s) \in T_{\paramgam(s)}M \). 

Each reparameterization of an embedding \(\tilde\gamma\in\on{Emb}(S;M)\) is a right action \(\tilde\gamma\mapsto\tilde\gamma\circ\varphi\) by an orientation-preserving diffeomorphism \(\varphi\in\Diffp(S)\).  
The \emph{shape space} of \emph{unparameterized embeddings} of \(S\) in \(M\) is the quotient of the embedding space modulo these actions:
\begin{align}
    \UEmb(S;M) \coloneqq \on{Emb}(S;M)/\Diffp(S).
\end{align}
The space \(\UEmb(S;M)\) is also referred to as the \emph{nonlinear Grassmannian of type \(S\)} \cite{HallerVizman2003NonlinearGA}, which is an infinite-dimensional manifold \cite{bauerBruverisMichor2014overview}, with proper tangent spaces. 
In the following, we denote an element of \(\UEmb(S;M)\) by \(\gamma\), and any representative of \(\gamma\) by \(\tilde\gamma\in \Emb(S;M)\).
The tangent space \(T_\gamma \UEmb(S;M)\) of \(\UEmb(S;M)\) at \(\gamma\) is the image \(d\pi|_{\tilde\gamma}(T_{\tilde\gamma}\Emb(S;M))\), where $\pi\colon  
    \Emb(S;M) \to \UEmb(S;M)$ is the quotient map.
In other words, \(T_\gamma\UEmb(S;M)\cong T_{\tilde\gamma}\Emb(S;M)/\ker d\pi|_{\tilde\gamma}\).  Here, \(\ker d\pi|_{\tilde\gamma}\) is given by infinitesimal right actions by the Lie algebra \(\diff(S)\) of \(\Diffp(S)\), implying \(\ker d\pi|_{\tilde\gamma} = d\tilde\gamma(\diff(S))\).  In particular, \(\ker d\pi|_{\tilde\gamma}\) consists of vector fields \(\dot{\tilde\gamma}\in\Gamma(\tilde\gamma^*TM)\) that are tangential to \(\gamma\), which do not change the shape of \(\gamma\).  

The shape space of all unparameterized \(k\)-dimensional surfaces \(S\) in \(M\) is the \(k\)-th \emph{nonlinear Grassmannian}
\begin{align}
    \on{Gr}_k(M)\coloneqq \bigsqcup_S \UEmb(S;M),
\end{align}
where the disjoint union iterates over \(k\)-dimensional compact oriented manifolds \(S\) up to diffeomorphisms.
The boundary operator \(\partial\) is well-defined for nonlinear Grassmannians \(\partial\colon\on{Gr}_k(M)\to\on{Gr}_{k-1}(M)\), where the boundary \(\partial\gamma\) of a shape \(\gamma\in\UEmb(S;M)\) is \(\partial\gamma = \pi(\tilde\gamma|_{\partial S})\), which is the restriction of any of its parameterization \(\tilde\gamma\in\Emb(S;M)\) on the boundary \(\partial S\), followed by the projection \(\pi\colon\Emb(S;M)\to\UEmb(S;M)\).  Define the shape spaces of \emph{closed} and \emph{exact} unparameterized \(k\)-surfaces respectively as
\begin{align}
    \on{Gr}_k^{\rm cl}(M)\coloneqq\bigsqcup_{S\colon\partial S = \varnothing}\UEmb(S;M) \subset\on{Gr}_k(M),\quad\on{Gr}_k^{\rm ex}(M)\coloneqq \partial\on{Gr}_{k+1}(M)\subset\on{Gr}_k^{\rm cl}(M).
\end{align}


\subsubsection{Diffeomorphism Action}
There is a left action of the diffeomorphism group $\Diff(M)$ of the ambient space \(M\) on the shape space $\on{Gr}_k(M)$
representing \emph{moving shapes in \(M\)}, defined by
\begin{align}\label{eq:MovingShapesInM}
    f \circ \gamma \coloneqq \pi(f \circ \tilde\gamma), \quad f\in\Diff(M),\quad \gamma\in \on{Gr}_k(M),
\end{align}
 where \(\paramgam \in \pi^{-1}\gamma\) is any parametrization of \(\gamma\).

Each connected component of $\on{Gr}_k(M)$ (respectively $\on{Gr}^{\rm cl}_k(M)$ and $\on{Gr}^{\rm ex}_k(M)$) is an orbit of the action of $\Diff_0(M)$, the connected component of $\Diff(M)$ containing $\id_M$. This is a classical result due to Thom (see, e.g., \cite{hirsch2012differential}). Consequently, any tangent vector $\dot\gamma \in T_\gamma \on{Gr}_k(M)$ can be written as
$\dot \gamma = u \circ \gamma \coloneqq d\pi|_{\paramgam}(u \circ \paramgam)$
for some $u \in \diff(M)\cong\Gamma(TM)$. 
Based on this connectivity argument, most of the geometric studies of \(\on{Gr}_k(M)\) can be restricted to a single \(\Diff_0(M)\)-orbit \(\cO\) in \(\on{Gr}_k(M)\).

In fact, for \(k<m-1\), the action by the subgroup $\SDiff_0(M)\subset \Diff_0(M)$ of volume-preserving diffeomorphisms is transitive on each \(\Diff_0(M)\)-orbit in \(\on{Gr}_k(M)\) \cite[Proposition 2]{HallerVizman2003NonlinearGA}.  In other words, \(\cO\) is also an \(\SDiff_0(M)\)-orbit.  Much of the theory involving moving shapes of codimension higher than 1 remains the same after restricting the diffeomorphism group \(\Diff_0(M)\) to \(\SDiff_0(M)\) and the space \(\diff(M)\) of vector fields to the space \(\sdiff(M)\) of divergence-free vector fields.

\subsubsection{Fundamental Vector Fields}
The group action of \(\Diff_0(M)\) on \(\cO\subset\on{Gr}_k(M)\) induces an infinitesimal action.
\begin{definition}
\label{def:FundamentalVectorFieldX}
    Let \(X^{(\cdot)}\colon \Gamma(TM)\to\Gamma(T\cO)\) be the map defined by
    \begin{align}
        (X^u)_{\gamma}\coloneqq (u\circ\gamma) \in T_\gamma\cO,\quad\text{for each \(u\in\Gamma(TM)\) and \(\gamma\in\cO\)}.
    \end{align}
    The vector field \(X^u\in\Gamma(T\cO)\) is called the \emph{fundamental vector field} corresponding to the generator \(u\in\Gamma(TM)\cong\diff(M)\).
\end{definition}
Note that \((X^u)_\gamma\) is the derivative of the group action \(\Diff_0(M)\times\cO\to\cO\), \((f,\gamma)\mapsto f\circ\gamma\), with respect to \(f\in\Diff_0(M)\) at the identity, evaluated on \(u\in \diff(M) = T_{\id_M}\Diff_0(M)\).
\begin{proposition}\label{prop:FundamentalVectorFieldIsLieAlgebraHomomorphism}
    \(X^{(\cdot)}\) is a Lie algebra homomorphism of vector fields.  That is, 
    \begin{align}
        [X^u,X^v] = X^{[u,v]}
    \end{align}
    where \([\cdot,\cdot]\) denotes the vector field Lie bracket.
\end{proposition}
\begin{proof}
    The left group action \((f,\gamma)\mapsto f\circ\gamma\), viewed as the map \(f\mapsto(\gamma\mapsto f\circ\gamma)\), defines a group homomorphism \(\Diff_0(M)\to \Diff(\cO)\).  The differential of this map at \(\id_M\in\Diff_0(M)\) is a Lie algebra homomorphism, given by the map \(X^{(\cdot)}\colon \diff(M)\to\diff(\cO)\). 
    Now apply the identifications \(\diff(M)\cong\Gamma(TM)\) and \(\diff(\cO)\cong\Gamma(T\cO)\).  Note, however, that the isomorphism \(\diff(M)\xrightarrow{\cong}\Gamma(TM)\) is a Lie algebra anti-homomorphism; that is, the Lie bracket \([\cdot,\cdot]_{\diff(M)}\) arising from the group structure and the vector field Lie bracket \([\cdot,\cdot]_{\Gamma(TM)}\) have opposite signs: \([u,v]_{\diff(M)} = -[u,v]_{\Gamma(TM)}\).  
    Nevertheless, applying this Lie algebra anti-isomorphism at both ends of the homomorphism  \(X\colon\diff(M)\to\diff(\cO)\) yields a Lie algerba homomorphism \(X\colon\Gamma(TM)\to\Gamma(T\cO)\).
\end{proof}

Recall that any tangent vector \(\dot\gamma\in T_\gamma\cO\) can be written as \(\dot\gamma = u\circ\gamma\) for some \(u\in\Gamma(TM)\), which is the evaluation \((X^u)_\gamma\) of \(X^u\in \Gamma(T\cO)\) at \(\gamma\).  Therefore, every tangent vector of \(\cO\) can be extended to a fundamental vector field.  This gives a convenient representation of each tangent space of the nonlinear Grassmannian:
\begin{align}\label{eq:XRepresentationOfTO}
    T_\gamma\cO = \{X^u_\gamma\,|\, u\in\Gamma(TM)\}.
\end{align}

\subsubsection{Integrals on Submanifolds}
The integral \(\int_\gamma\alpha = \int_{\tilde\gamma}\alpha\) of a differential form \(\alpha\in\Omega^k(M)\) is independent of the choice of parameterization \(\tilde\gamma\)  of \(\gamma\).  Therefore, each smooth differential form \(\alpha\in\Omega^k(M)\) defines a smooth function \(\gamma\mapsto \int_\gamma\alpha\) on \(\on{Gr}_k(M)\).  
The derivative of this evaluation function \(\int_{(\cdot)}\alpha\colon \on{Gr}_k(M)\to\RR\) along each tangent vector \(X_\gamma^u\in T_\gamma\on{Gr}_k(M)\), \(u\in\Gamma(TM)\), is given by the Leibniz integral rule 
\begin{align}
\label{eq:ReynoldsTransportTheorem}
    X_\gamma^u \left(\int_{(\cdot)}\alpha\right) = \int_{\gamma}\cL_u\alpha,
\end{align}
where \(\cL_u\) denotes the Lie derivative.  This follows from setting a time-dependent \(\gamma_t\) with \(\gamma_0 = \gamma\), \({\partial\over\partial t}|_{t=0}\gamma_t = X^u_\gamma = u\circ\gamma\) (\autoref{def:FundamentalVectorFieldX}), and taking time derivative of \(\int_{\gamma_t}\alpha\).

\subsection{Marsden--Weinstein Sympelctic Structure}
Let \(\cO\) be a \(\Diff_0(M)\)-orbit of the space \(\on{Gr}^{\rm ex}_{m-2}(M)\) of exact codimension-2 submanifolds.  The space \(\cO\) is a \emph{weak symplectic manifold}.  That is, it is equipped with a closed and \emph{weakly non-degenerate} 2-form.  This 2-form is the following \emph{Marsden--Weinstein} (MW) \emph{form} \(\omega^{\rm MW}\in\Omega^2(\cO)\).
Here we use the representation \eqref{eq:XRepresentationOfTO} by fundamental vector fields for tangent vectors in \(T_\gamma\cO\).

\begin{definition}
    On an unbounded manifold \(M\) with infinite volume \(\int_M\mu = \infty\), the MW form is defined by
\begin{align}\label{eq:UnboundedMWForm}
    \omega^{\rm MW}|_{\gamma}(X^u_\gamma,X^v_\gamma) \coloneqq \int_\gamma\iota_v\iota_u\mu,\quad\text{for  \(u,v\in\Gamma(TM)\)}.
\end{align}
On a closed manifold \(M\) with volume \(|M|\coloneqq\int_M\mu <\infty\), the MW form includes a normalizing factor%
\footnote{Many authors do not include this normalization. However, there are topological results about \(\omega^{\rm MW}\), such as its prequantizatbility \cite{HallerVizman2003NonlinearGA}, depend on conditions such as \(\int_M\mu\in\ZZ\).  This means that some topological results about \(\omega^{\rm MW}|_{\gamma}(\dot\gamma,\mathring\gamma) = \int_\gamma\iota_v\iota_u\mu\) are not scale invariant.  Our definition \eqref{eq:NormalizedMWForm} with an \(1\over |M|\) factor ensures the scale invariance.  The normalization can also be interpreted as that the volume form \(\mu\) is replaced by \(\overline\mu = {\mu\over|M|}\), which always satisfies \(\int_M\overline\mu = 1\) (\cf\@ the setup in \autoref{sec:MainResults}).}
\begin{align}
\label{eq:NormalizedMWForm}
     \omega^{\rm MW}|_{\gamma}(X^u_\gamma,X^v_\gamma) \coloneqq {1\over |M|}\int_\gamma\iota_v\iota_u\mu,
     \quad\text{for  \(u,v\in\Gamma(TM)\)}.
\end{align}
\end{definition}

The MW form \(\omega^{\rm MW}\) is weakly-nondegenerate in the sense that the map \(X^u_\gamma\mapsto \omega^{\rm MW}|_\gamma(X^u_\gamma,\cdot)\) from \(T_\gamma\cO\) to \(T_\gamma^*\cO\) is injective, but not surjective.  Throughout this article, we refer to such weak symplectic forms simply as \emph{symplectic}.  For more backgrounds on weak symplectic geometry, see \cite[Chapter VI]{Michor1984convenient} and \cite[Appendix B]{bauerMichorIshida2026symplectic}.

\subsubsection{Symplectic Potential (for unbounded \(M\))}\label{sec:explicit symplectic potential}
On a symplectic manifold, a \emph{symplectic potential} is a 1-form whose exterior derivative equals to the symplectic form. 

We show that the MW form admits a symplectic potential provided that the volume form \(\mu\) is exact: \(\mu = d\nu\) for some \(\nu\in\Omega^{m-1}(M)\).
Note that \(\mu\) is never exact on a closed \(M\) since it generates the fundamental class.  Hence \(M\) is not a closed manifold in this setting, and the MW form is taken to be \eqref{eq:UnboundedMWForm}.
\begin{theorem}\label{thm:SymplecticPotentialForExactVolumeForm}
    Suppose \(\nu\in\Omega^{m-1}(M)\) satisfies \(d\nu = \mu\).  Then \(\eta\in\Omega^{1}(\cO)\) defined by
    \begin{align}\label{eq:SymplecticPotentialForExactVolumeForm}
        \eta_{\gamma}(X^u_\gamma)\coloneqq \int_{\gamma}\iota_u\nu,\quad\text{for \(u\in\Gamma(TM)\),}
    \end{align}
    satisfies \(d\eta = \omega^{\rm MW}\).
\end{theorem}
A case of \autoref{thm:SymplecticPotentialForExactVolumeForm} for $m=3$, namely for closed curves in $M$, is studied in the literature  \cite[Proposition 3.5.2]{brylinski2009loop}.

\begin{proof}
    Since each tangent vector of \(\cO\) extends to a fundamental vector field \eqref{eq:XRepresentationOfTO}, it suffices to check \(d\eta(X^u,X^v) = \omega^{\rm MW}(X^u, X^v)\) for fundamental vector fields \(X^u\), \(X^v\).  Apply the exterior derivative formula \(d\eta(X^u,X^v) = X^u (\eta(X^v)) - X^v(\eta(X^u)) - \eta([X^u,X^v])\), \autoref{prop:FundamentalVectorFieldIsLieAlgebraHomomorphism} that \([X^u,X^v] = X^{[u,v]}\), and the derivation formula \eqref{eq:ReynoldsTransportTheorem}:
    \begin{align*}
        d\eta_\gamma(X^u_\gamma, X^v_\gamma) &= \textstyle X^u_\gamma \left(\int_{(\cdot)}\iota_v\nu\right) - X^v_\gamma\left(\int_{(\cdot)}\iota_u\nu\right) - \int_\gamma \iota_{[u,v]}\nu\\
        &=\textstyle\int_\gamma \left(\cL_u\iota_v\nu - \cL_v\iota_u\nu - \iota_{[u,v]}\nu\right)
        \mathrel{\overset{(\star)}=} \int_\gamma\iota_v\iota_u d\nu = \int_\gamma\iota_v\iota_u\mu = \omega^{\rm MW}_\gamma(X^u,X^v).
    \end{align*}
    where the equality \((\star)\) follows from applying the identity \(\cL_u \iota_v\nu = \iota_{[u,v]}\nu + \iota_v\cL_u\nu\) to the first term, and that \(\iota_v\cL_u\nu - \cL_v\iota_u\nu = \iota_v\iota_u d\nu + \iota_v d\iota_u\nu - \iota_v d\iota_u\nu - d\iota_v\iota_u\nu = \iota_v\iota_u d\nu\) by Cartan's formula.  The exact term \(d\iota_v\iota_u\nu\) vanishes under the integral \(\int_\gamma\) using the Stokes theorem and the fact that \(\gamma\) closed.
\end{proof}
Alternatively, \autoref{thm:SymplecticPotentialForExactVolumeForm} can be  proven using a type of calculus for transgressions of differential forms, called the tilda calculus \cite{Vizman2011hatcalculus}.

\autoref{thm:SymplecticPotentialForExactVolumeForm} implies the exactness of the MW form for the Euclidean space \(M = \RR^m\) with the standard volume form \(\mu = dx^1\wedge\cdots\wedge dx^m\). This is because the volume form is exact with potential forms such as
\begin{align}
\label{eq:VolumePotentialOnEuclideanSpace}
    \nu_{\bx}(\bv_1,\ldots,\bv_{m-1}) = {1\over m}\det(\bx,\bv_1,\ldots,\bv_{m-1}).
\end{align}
Applying \autoref{thm:SymplecticPotentialForExactVolumeForm} to \eqref{eq:VolumePotentialOnEuclideanSpace} with \(m=3\) yields the results of previous work \cite{Padilla2019bubbleRings,Tabachnikov2017bicycle} which showed that the MW structure \(\omega^{\rm MW}|_\gamma(\dot\gamma,\mathring\gamma) = \int_{\SS^1}\det(\dot{\paramgam},\mathring\paramgam,\partial_s\paramgam)\, ds\) on the space of closed space curves $\UEmb(\SS^1;\RR^3)$ is exact with explicit formula for the 1-form $\eta$:
\begin{align}\label{eq:liouville_classical_space_curves}
    \eta_\gamma(\dot \gamma)
    = \frac{1}{3} \int_{\SS^1} \det(\paramgam, \dot \paramgam, \partial_s \paramgam) \, ds
\end{align}
where $\paramgam$ is any parametrization of $\gamma$. The proof in \cite{Tabachnikov2017bicycle} relies on integration by parts using an explicit parametrization of $\SS^1$, which does not directly extend to higher dimensions.  
\autoref{thm:SymplecticPotentialForExactVolumeForm} with \eqref{eq:VolumePotentialOnEuclideanSpace} extends this previous result to arbitrary dimensions.

\begin{remark}[Exactness of \(\omega^{\rm MW}\) for closed and compact \(M\)]
We are not aware of the existence or non-existence of a symplectic potential \(\eta\) on a general closed and compact manifold \((M,\mu)\). 
There are a few special cases where exactness has been proved. For example, the space of two opposite oriented distinct points on \(M = \SS^2\), is symplectomorphic to the cotangent bundle of a certain space and therefore admits a symplectic potential \cite{Oakley2013twoPoints}. In the general case, we speculate that exactness of the MW form is unlikely since the volume form \(\mu\) is not exact, or at least that explicit expression such as  \eqref{eq:SymplecticPotentialForExactVolumeForm} is unavailable.
\end{remark}



\section{Implicit Representations of Codimension-2 Submanifolds}\label{sec:implicit_representations}
We have reviewed the explicit shape space \(\on{Gr}_{m-2}(M)\) of codimension-2 submanifolds in the ambient manifold \(M\). In this section, we introduce their \emph{implicit representations} and study the geometry of the space of these implicit representations as a fiber bundle over the exact shape space \(\on{Gr}_{m-2}^{\rm ex}(M)\). 

\subsection{Implicit Representations}\label{sec:implicit_representation_submanifolds}

\begin{definition}
    The space  of implicit representations of codimension-2 submanifolds is the subset \(\cF\subset C^\infty(M;\CC)\) of smooth complex-valued functions \(\psi\) such that the zero set \(\psi^{-1}(0)\) is nonempty, and the differential \(d\psi|_{x}\colon T_xM\to \CC\) is surjective at each \(x\in\psi^{-1}(0)\).
\end{definition}

By the implicit function theorem, the zero set of each \(\psi\in\cF\) is a smooth oriented codimension-2 submanifold.  The orientation of the zero set \(\gamma\) of \(\psi\) is defined by the winding number of the argument of \(\psi\):  for any 2-dimensional oriented disk \(\Sigma\) which intersects \(\gamma\) transversely, the winding number \({1\over 2\pi}\int_{\partial\Sigma}\Im{d\psi\over\psi}\) equals the signed number of intersections between \(\Sigma\) and \(\gamma\).

\begin{definition}
    Let \(\Pi\colon \cF\to\on{Gr}_{m-2}^{\rm ex}(M)\) be the map that assigns each \(\psi\in\cF\) its zero set \(\gamma = \Pi\psi \in \on{Gr}^{\rm ex}_{m-2}(M)\).
\end{definition}
Note that the zero set \(\gamma\) of \(\psi\in\cF\) must be exact:  it must be the boundary of an oriented codimension-1 submanifold.  In fact, \(\gamma\) is the boundary of any regular level set of the argument \(\arg(\psi)\) of \(\psi\).

Conversely, for any $\gamma\in \on{Gr}_{m-2}^{\rm ex}(M)$, one can construct an implicit representation \(\psi\in\cF\) for it.  For example, given \(\gamma = \partial\Sigma\), construct an angle function \(\theta \colon M \setminus \gamma \to \RR/2\pi \ZZ\) as the solution to the Dirichlet problem with a jump boundary condition at \(\Sigma\):
    \begin{align}
        \begin{cases}
            \Delta\theta(x) = 0\text{ for \(x\in M\setminus\overline{\im\Sigma}\)},\\
            \lim_{x\to  \Sigma^+}\theta(x) - \lim_{x\to  \Sigma^-}\theta(x) = 2\pi  
        \end{cases}
    \end{align}
where \(\Sigma^{\pm}\) denote the front and back sides of \(\Sigma\) respectively, and $\Delta$ is the Laplacian with respect to an arbitrary Riemannian metric on $M$.  Let \(\rho\in C^\infty(M;\RR_{\geq 0})\) be a non-negative function that is asymptotically the distance function $\on{dist}_\gamma$ near $\gamma$. Then
     \(\psi\coloneqq \rho e^{\ii\theta}\) belongs to \(\cF\) and \(\Pi\psi  = \gamma\). 

The above construction for the phase part \(e^{\ii\theta}\) of \(\psi\) is identical to the so-called \emph{solid angle field} in the case of \(M = \SS^m\) or \(\RR^m\)  \cite{borodzik2020solid, chernIshida2024prequantization}. \autoref{fig:solid_angle_theta} shows examples of solid angle fields for space curves $\Gr_{1}^{\rm ex}(\RR^3)$.

\begin{figure}[htbp]
\centering
\begin{minipage}[b]{0.33\columnwidth}
\includegraphics[width=\textwidth]{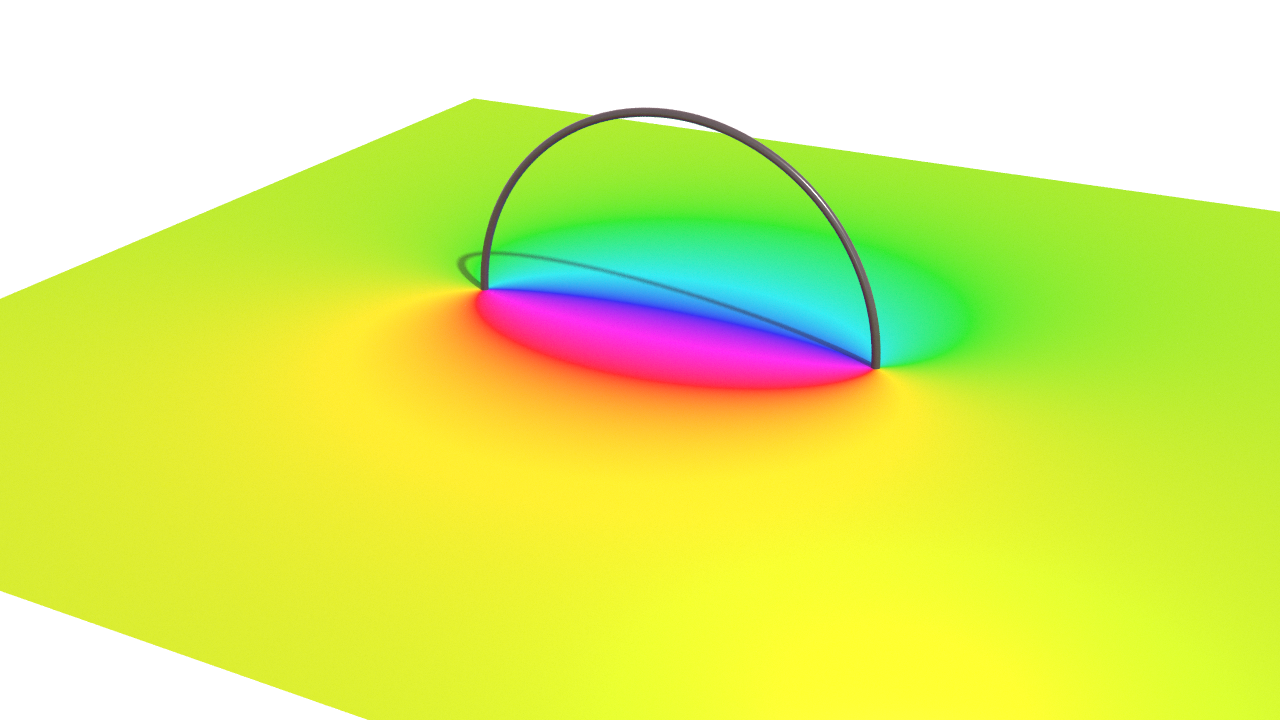}
\end{minipage}
\vrule
\begin{minipage}[b]{0.3\columnwidth}
\includegraphics[width=\textwidth]{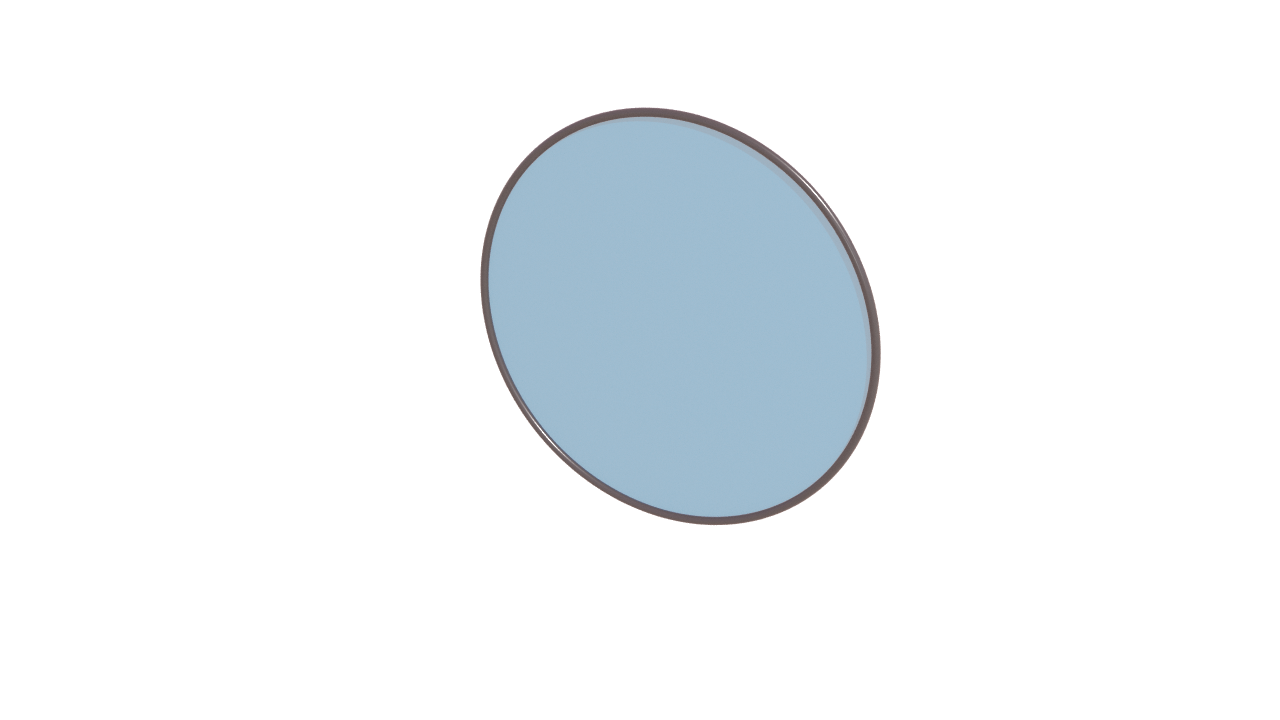}
\end{minipage}
\begin{minipage}[b]{0.3\columnwidth}
\includegraphics[width=\textwidth]{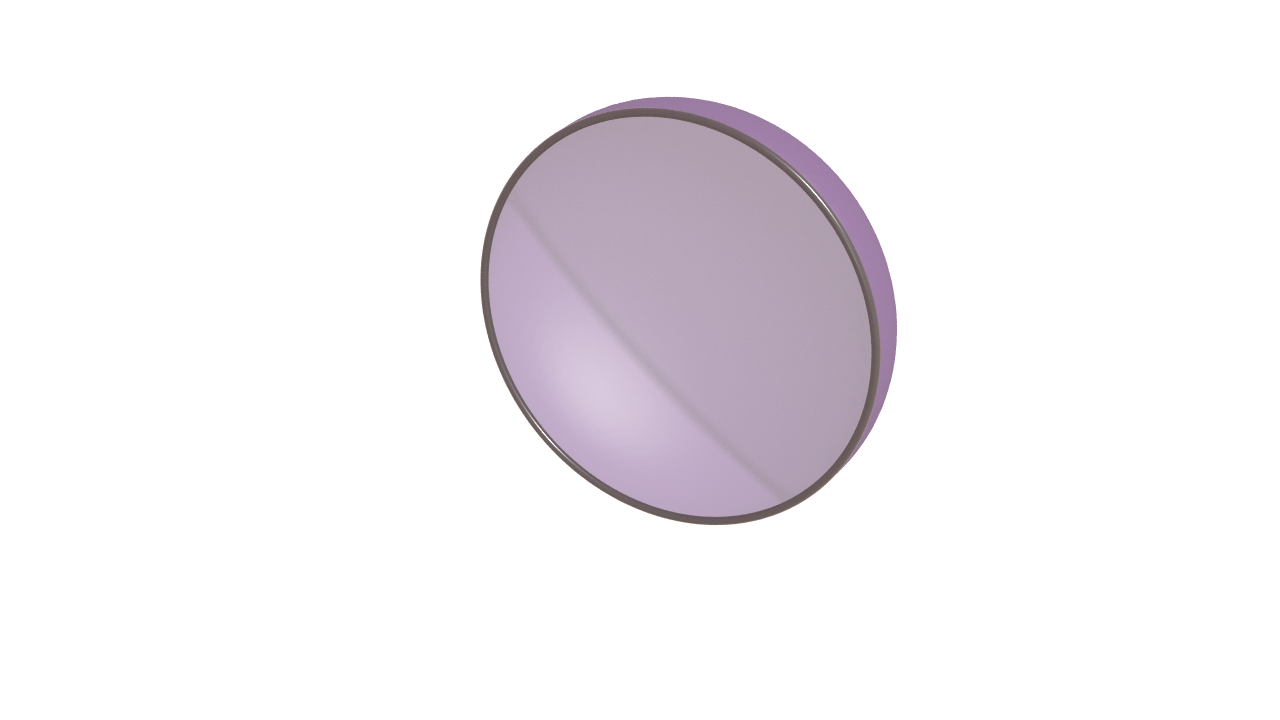}
\end{minipage}
\hrule
\begin{minipage}[b]{0.33\columnwidth}
\includegraphics[width=\textwidth]{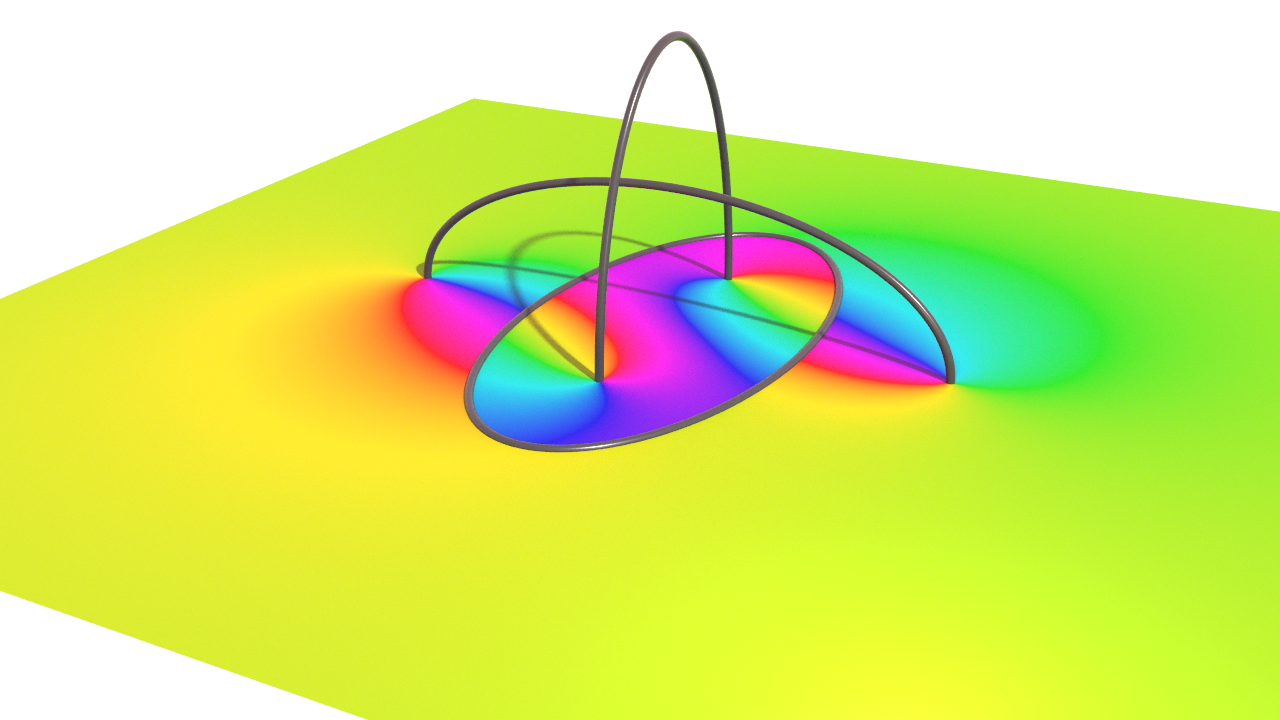}
\end{minipage}
\vrule
\begin{minipage}[b]{0.3\columnwidth}
\includegraphics[width=\textwidth]{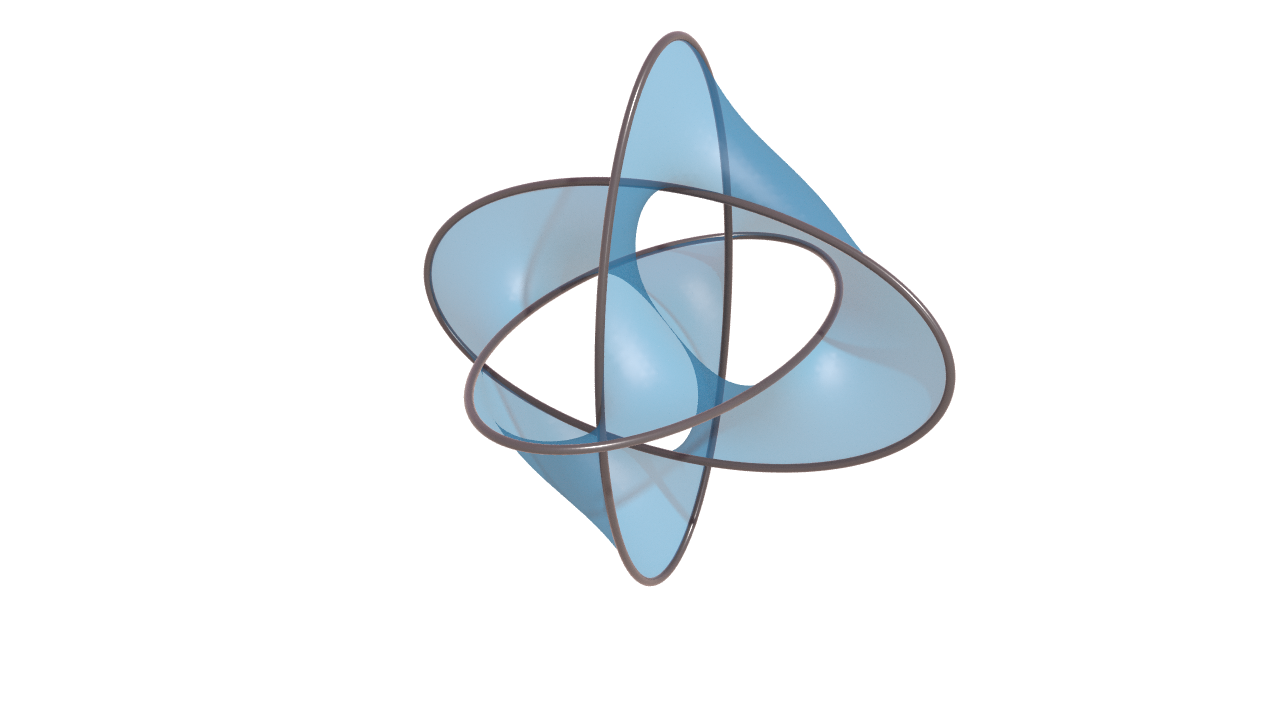}
\end{minipage}
\begin{minipage}[b]{0.3\columnwidth}
\includegraphics[width=\textwidth]{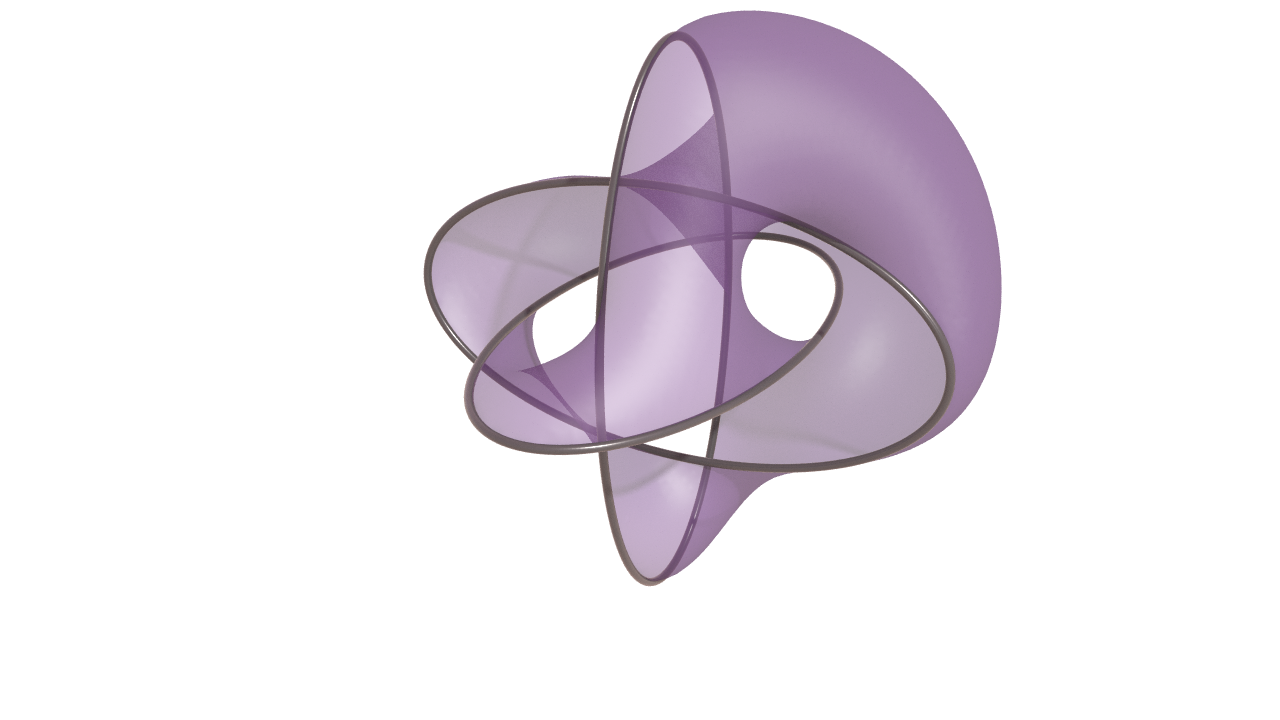}
\end{minipage}
\caption{
Visualization of implicit representations $\psi\in \cF$ for a submanifold $\gamma$ given as a circle (top row) and a link of three circles called Borromean rings (bottom row). In each row, the left image shows the phase level sets on  the plane \(z=0\), where each color represents a phase \ie, the preimage of an element \(s \in \SS^1\) under the phase map \(\phi= \psi/|\psi|\). The middle and the right images are the preimages of two distinct elements in \(\SS^1\) realized as Seifert surfaces bordered by \(\psi^{-1}(0)\). These Seifert surfaces have codimension 1, and their intersection has codimension 2, representing $\gamma$.}
 \label{fig:solid_angle_theta}
\end{figure}

\subsection{Diffeomorphism and Gauge Action}
Recall that each connected component of the explicit shape space $\on{Gr}_{m-2}^{\rm ex}(M)$ is a $\Diff_0(M)$-orbit $\cO$. It turns out that each connected component of the implicit shape space $\cF$ is also an orbit under some group action (\autoref{prop:tangent space of F is fundamental vector fields}), which we explain now.

The diffeomorphism group \(\Diff_0(M)\) acts on implicit representations \(\psi\in\cF\subset C^\infty(M;\CC)\) by pushing forward by the diffeomorphism:
\begin{align}
    \psi\mapsto \psi\circ f^{-1},\quad \psi\in\cF, f\in \Diff_0(M).
\end{align}
This action on \(\cF\) is consistent with the group action \eqref{eq:MovingShapesInM} on \(\on{Gr}_{m-2}^{\rm ex}(M)\): If \(\psi\) is transported by  \(\psi\mapsto\psi\circ f^{-1}\), then its zero set \(\gamma = \Pi\psi \) is transported by \(\gamma\mapsto f\circ\gamma\).  That is, this diffeomorphism action commutes with \(\Pi\).

The following is another group action.  Observe that the zero set \(\Pi\psi \) of \(\psi\) is invariant under a multiplication by a non-vanishing complex-valued function, represented by the exponential \(e^{\varphi}\) of complex-valued function \(\varphi\in C^\infty(M;\CC)\).%
\footnote{On non-simply-connected \(M\), the space \(C^\infty(M;\CC^\times)\) of non-vanishing complex-valued function is a bigger set containing the image of exponential \(\{e^{\varphi}\,|\, \varphi\in C^\infty(M;\CC)\}\).  This will be elaborated in \autoref{sec:geometry_of_fibers}.  In fact, \(\Exp(C^\infty(M;\CC))\) is the connected component of the multiplicative group \(C^\infty(M;\CC^\times)\) containing the identity \(1\).}
This multiplication can be written as the multiplicative group \(\Exp(C^\infty(M;\CC))\coloneqq \{e^{\varphi}\,|\,\varphi\in C^\infty(M;\CC)\}\) acting on \(\cF\) via
\begin{align}
\label{eq:GaugeAction}
    \psi\mapsto e^{\varphi}\cdot\psi,\quad \psi\in\cF, \varphi\in C^\infty(M;\CC).
\end{align}
We call \eqref{eq:GaugeAction} a \emph{gauge transformation} of the implicit representation \(\psi\).

Combining both the diffeomorphism group and the gauge transformation group, consider the following group.
\begin{definition}
    Define the semi-direct product group
    \begin{align}
    \DC \coloneqq \Diff_0(M) \ltimes \Exp(C^\infty(M;\CC)),
    \end{align}
where $\Diff_0(M)$ acts on the multiplicative group $\Exp(C^\infty(M;\CC)) \coloneqq \{ e^\varphi \mid \varphi \in C^\infty(M;\CC) \}$ via
\begin{align}
    f \rhd e^\varphi = e^\varphi \circ f^{-1}, \quad f \in \Diff_0(M), e^\varphi \in \Exp(C^\infty(M;\CC)).
\end{align}
That is, the semi-direct product group structure is explicitly given by
\begin{align}
    (f,e^{\varphi})(g,e^{\chi}) = (f\circ g, e^{\varphi} \cdot  (e^\chi\circ f^{-1})),\quad f,g\in\Diff_0(M), e^{\varphi},e^{\chi}\in \Exp(C^\infty(M;\CC)).
\end{align}
The Lie algebra of the group \(\DC\) is denoted by $\AlgDC = \diff(M) \ltimes C^\infty(M;\CC)$, whose Lie bracket is given by
\begin{align}\label{eq:LieBracekt_of_AlgSemiD}
    [(u, a), (v, b)] = ([u, v], -\cL_u b + \cL_v a), \quad u, v \in \diff(M),\; a, b \in C^\infty(M;\CC),
\end{align}
where $[u, v]$ denotes the Lie bracket of vector fields on $M$.
\end{definition}

The group \(\DC\) acts on \(\cF\) (left group action) via
\begin{align}
    (f, e^\varphi) \rhd \psi =  e^\varphi\cdot(\psi \circ f^{-1}) , \quad (f, e^\varphi) \in \DC, \psi \in \cF.
\end{align}
That is, the function \(\psi\) is pushed forward by the diffeomorphism \(f\) (via the pullback by \(f^{-1}\)) followed by the gauge transformation by \(e^{\varphi}\).


\subsubsection{Fundamental Vector Field}

\begin{definition}\label{def:FundamentalVectorFieldY}
    Let \(Y\colon \AlgDC\to\Gamma(T\cF)\) denote the fundamental vector field associated with the group action of \(\DC\) on \(\cF\), given by
    \begin{align}
        (Y^{(u,\varphi)})_\psi\coloneqq -\cL_u\psi + \varphi\psi,\quad (u,\varphi)\in \AlgDC.
    \end{align}
\end{definition}

For $(u,a),(v,b)\in \AlgDC$, a direct copmutation shows
\begin{align}
   ( Y^{[(u,a),(v,b)]})_\psi= -\cL_{[u,v]} \psi + (\cL_v a - \cL_u b)\psi.
\end{align}
This result and the same argument as in \autoref{prop:FundamentalVectorFieldIsLieAlgebraHomomorphism} show the following relation:
\begin{proposition}\label{prop:Y anti homomorphism}
    The map \(Y\colon \AlgDC \to \Gamma(T\cF)\) is an anti Lie algebra homomorphism with respect to \eqref{eq:LieBracekt_of_AlgSemiD} and the vector field Lie bracket.  That is,
    \begin{align}
        Y^{[(u,a),(v,b)]} = -[Y^{(u,a)},Y^{(v,b)}].
    \end{align}
\end{proposition}

As in the case of explicit representations \eqref{eq:XRepresentationOfTO}, the tangent spaces of \(\cF\) are represented by the fundamental vector fields:
\begin{proposition}\label{prop:tangent space of F is fundamental vector fields}
    Let $\psi\in\cF$. Then 
    \begin{align}
    T_\psi \cF = \{ Y^{(u,\varphi)}_\psi = -\cL_u \psi + \varphi \psi \mid (u, \varphi) \in \AlgDC \}.
\end{align}
As a direct consequence, each connected component of \(\cF\) is a \(\DC\) orbit.
\end{proposition}
\autoref{prop:tangent space of F is fundamental vector fields} asserts that the time evolution of an implicit representation $\psi\in \cF$ is always described as the sum of the transport of $\psi$ along a vector field on $M$ and the pointwise perturbation by the multiplication of a function.  

\begin{proof}
    Since $\cF$ is an open submanifold of the Fr\'echet manifold $C^\infty(M;\CC)$, we have $ T_\psi \cF= T_\psi C^\infty(M;\CC) = C^\infty(M;\CC)$.
    Hence we need to show $C^\infty(M;\CC) =  \{ -\cL_u \psi + \varphi \psi \mid (u, \varphi) \in \AlgDC \}$ for $\psi\in \cF$.
    
   Clearly, $-\cL_u\psi + \varphi \psi\in C^\infty(M;\CC)$ for any $(u,\varphi)\in \AlgDC$. We now show the converse: any $\dot \psi \in   C^\infty(M;\CC) $ is expressed by $\dot \psi=-\cL_u \psi + \varphi \psi$ with some $(u,\varphi)$. Since $d\psi$ is surjective on $\gamma=\psi^{-1}(0)$, it is also surjective in $V$, a small open tubular neighborhood of $\gamma$. Therefore there is some $\tilde u$ such that $\dot \psi= -\iota_{\tilde u}d\psi= -\cL_{\tilde u}\psi$ in $V$. Take a smaller tubular neighborhood $U$ such that $\bar U\subset V$.  Then define $u$ globaly by setting $u\coloneqq \rho \tilde u$ with some smooth cutoff function $\rho$ which takes $1$ inside $U$ and vanishes outside $V$. Define $r:= \dot \psi + \cL_u\psi$ (so $r=0$ on $U$) and $\varphi:=r/\psi$. Note that $\varphi$ is globally defined (smoothly extends to $\psi^{-1}(0)$) as the tubular region $r^{-1}(0)=U$ is a codimension-0 set whose interior contains the codimension-2 set $\psi^{-1}(0)$. 
   Thus we constructed $(u,\varphi)\in \AlgDC$ yielding $\dot\psi=-\cL_u\psi +\varphi\psi$.
\end{proof}


The fundamental vector fields of the $\DC$ action on $\cF$ descend onto the fundamental vector fields of the $\Diff_0(M)$ action on the base space $\on{Gr}_{m-2}^{\rm ex}(M)$:
\begin{proposition}\label{prop:dPi}
    The differential \(d\Pi_\psi\colon T_\psi\cF\to T_{\Pi\psi }\on{Gr}_{m-2}^{\rm ex}\) of the fibration \(\Pi\colon\cF\to\on{Gr}_{m-2}^{\rm ex}\) at \(\psi\in\cF\) is given by
    \begin{align}
        d\Pi_\psi(Y_\psi^{(u,\varphi)}) = X_{\Pi\psi }^u,\quad u\in\Gamma(TM), \varphi\in C^\infty(M;\CC).
    \end{align}
\end{proposition}
\begin{proof}
    This follows from the fact that the action of the diffeomorphism part of \(\DC\) commute with \(\Pi\), and the gauge action part leaves \(\Pi\) invariant.
\end{proof}

\subsection{Geometry of the Fiber Bundle \(\Pi\colon\cF\to\on{Gr}_{m-2}^{\rm ex}(M)\)}\label{sec:geometry_of_fibers}
Here, we investigate the geometry of the bundle \(\Pi\colon\cF\to\on{Gr}_{m-2}^{\rm ex}(M)\) of implicit representations.  
For this study, we may restrict the bundle to \(\cF_\cO\coloneqq \Pi^{-1}\cO\) for a connected component \(\cO\) of \(\on{Gr}_{m-2}^{\rm ex}(M)\).
We show that each fiber has multiple connected components depending on the cohomology of \(M\). This is a key step toward our construction of a prequantum structure in \autoref{sec:prequantum_bundle}. 


\begin{definition}[Fiber-preserving subgroup]
    For each \(\gamma\in\cO\), define \(\Diff_\gamma(M)\subset\Diff_0(M)\) to be the connected component of the stabilizer of \(\gamma\) (under the action of \(\Diff_0(M)\)) containing the identity. That is, \(\Diff_\gamma(M)\) is the subgroup of $\Diff_0(M)$ consisting of diffeomorphisms $f$ for which there exists a path $\{f_t\}_{t \in [0,1]} \subset \Diff_0(M)$ such that $f_t \circ \gamma = \gamma$ for all $t \in [0,1]$, with $f_0 = \id_M$ and $f_1 = f$.
    Define \(\DC_\gamma \coloneqq \Diff_\gamma(M) \ltimes \on{Exp}(C^\infty(M; \CC))\) as the corresponding subgroup of \(\DC\).
\end{definition}

The action on \(\cF\) by the subgroup \(\DC_\gamma\) preserves the fiber \(\Pi^{-1}\gamma\).  This \(\DC_\gamma\)-action on \(\Pi^{-1}\gamma\) is in fact transitive on each connected component of the fiber (\autoref{cor:TransitiveActionOnFiber}).


The following two examples show that both components of the \(\DC_\gamma\)-action,  $\Diff_\gamma(M)$ and $\on{Exp}(C^\infty(M; \CC))$,  must work together to achieve connected componentwise transitivity.

\begin{example}[Non-transitivity of $\Diff_\gamma(M)$-action]\label{eg:non_transitivity_twist}
    Let $\gamma$ be four points $\{z_k\}_{k=1}^4$ on $M=\RR^2$ where  where $z_1, z_2$ have positive orientations and $z_3,z_4$ have negative orientations.
     We then let $\psi_0$ be an implicit representation of $\gamma$ defined by
    \(\psi_0(z)=(z-z_1)(z-z_2)(\overline{z-z_3})(\overline{z-z_4})\),
     and  $\psi_1 \coloneqq \psi_0 e^{\ii\pi/2}$ as illustrated in \autoref{fig:BS_psis}.
    These two functions clearly lie in the same fiber $\Pi^{-1}\gamma$, but
    there is no diffeomorphism $f$ such that $\psi_1 = \psi_0 \circ f$ resolving the topological differences of the phase level sets. In contrast, the group action of $e^{\ii\pi/2} \in \on{Exp}(C^\infty(M; \CC))$ joins $\psi_0$ and $\psi_1$. 
\end{example}

\begin{figure}[htbp]
\centering
\begin{minipage}[b]{0.4\columnwidth}
\includegraphics[width=\textwidth]{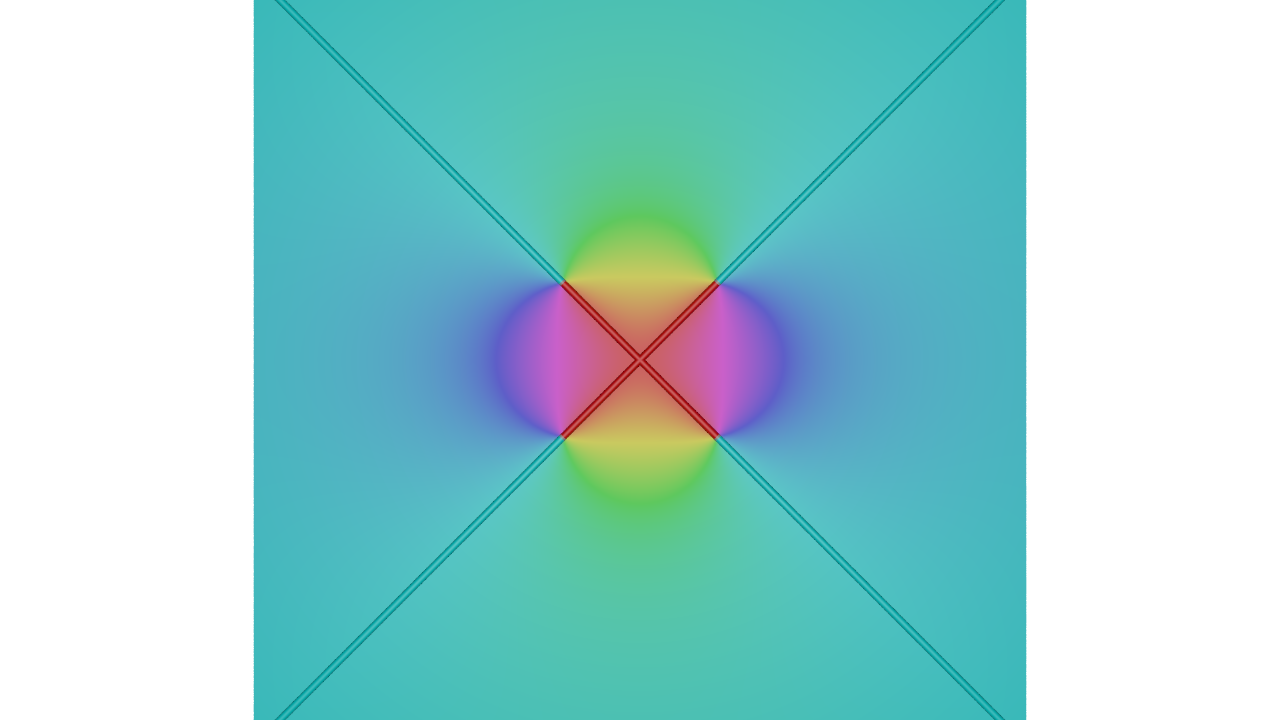}
\par\vspace{-5pt}
\caption*{$\psi_0$}
\end{minipage}
\begin{minipage}[b]{0.4\columnwidth}
\includegraphics[width=\textwidth]{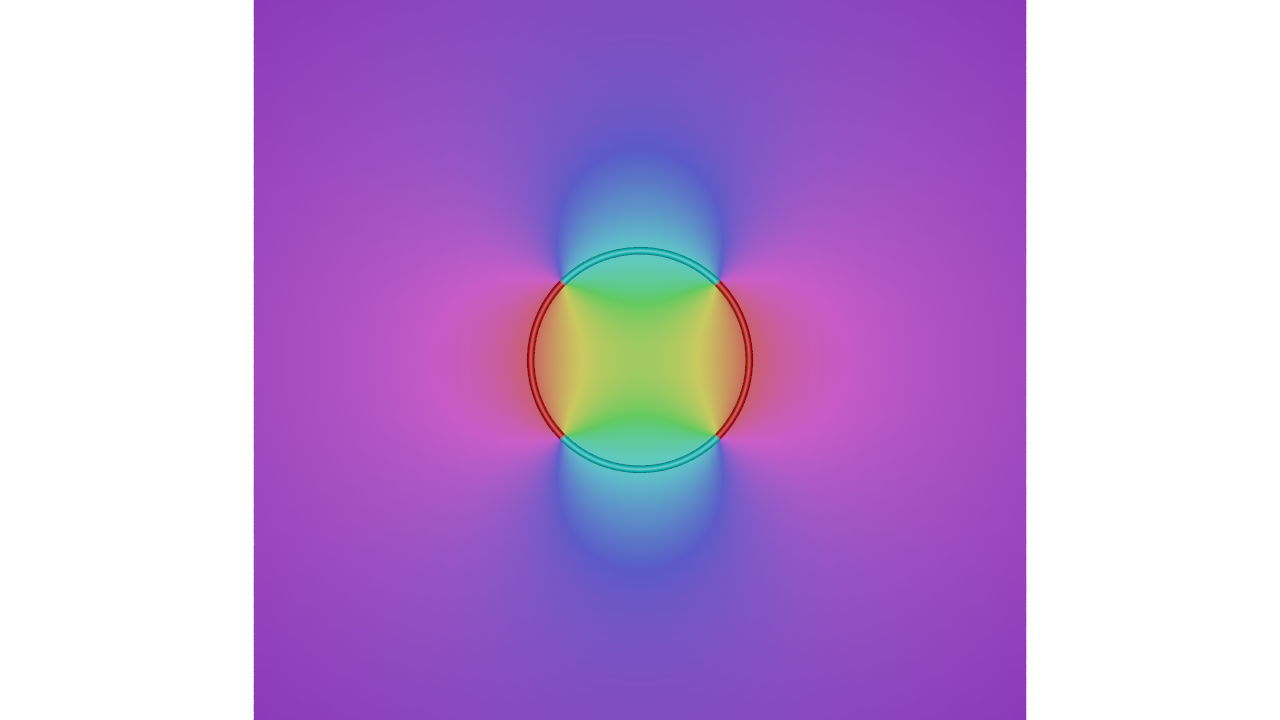}
\par\vspace{-5pt}
\caption*{$\psi_1 = \psi_0 e^{\ii\frac{\pi}{2}}$}
\end{minipage}
\vspace{-5pt}
\caption{Implicit representations of four points in $\RR^2$. Each color indicates a phase value of \(\phi_i = \psi_i / |\psi_i|\). The highlighted light-blue and red curves correspond to the level sets $\phi^{-1}(1)$ and $\phi^{-1}(e^{\ii\pi})$, respectively. Clearly, there is no diffeomorphism $f$ such that $\psi_0 \circ f = \psi_1$ that can handle the topological changes in these level sets.}
\label{fig:BS_psis}
\end{figure}

\begin{example}[Non-transitivity of $\on{Exp}(C^\infty(M; \CC))$-action]\label{eg:non_transitivity_conformal}
Let $M = \RR^2$, and consider implicit representations $\psi_0, \psi_1$ defined by $\psi_0(x, y) = x + \ii y$ and $\psi_1(x, y) = x + y + \ii y$. These functions share the same zero at $(x, y) = (0, 0)$ and the same orientation. However, there is no nowhere-vanishing smooth function $\varphi$ such that $\psi_1 = \varphi \psi_0$. Indeed, the quotient 
\(\frac{\psi_1}{\psi_0} = 1 + \frac{xy}{x^2 + y^2} - \ii \frac{y^2}{x^2 + y^2}\)
is discontinuous at the origin. This is caused by the fact that $\psi_0$ and $\psi_1$ have different  rates of phase change around the origin, as illustrated in \autoref{fig:phase_lines}.  On the other hand, with the diffeomorphism $f(x, y) = (x - y, y)$, we attain $ \psi_0 \circ f=\psi_1$.
\begin{figure}[htbp]
\centering
\begin{minipage}[b]{0.32\columnwidth}
\includegraphics[width=\textwidth]{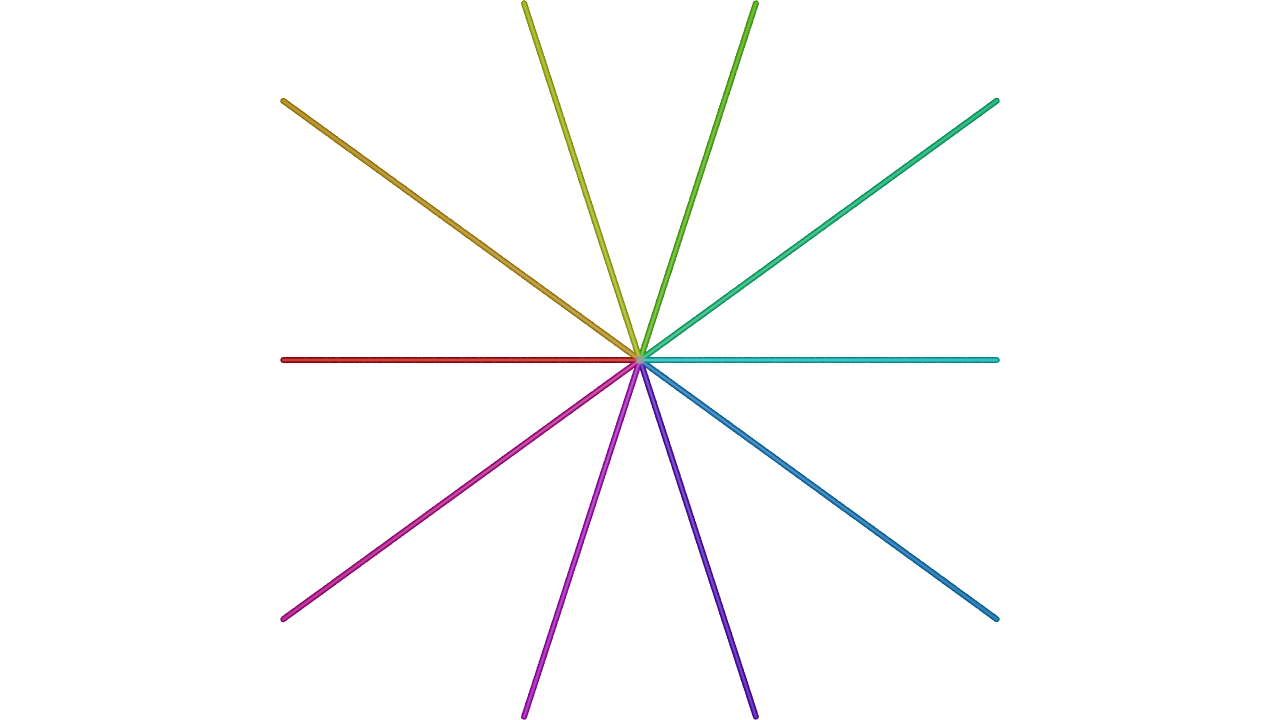}
\captionsetup{margin=-0pt, singlelinecheck=false, justification=centering}
\caption*{$\psi_0(x,y)=x+\ii y$}
\end{minipage}
\hspace{20pt}
\begin{minipage}[b]{0.32\columnwidth}
\includegraphics[width=\textwidth]{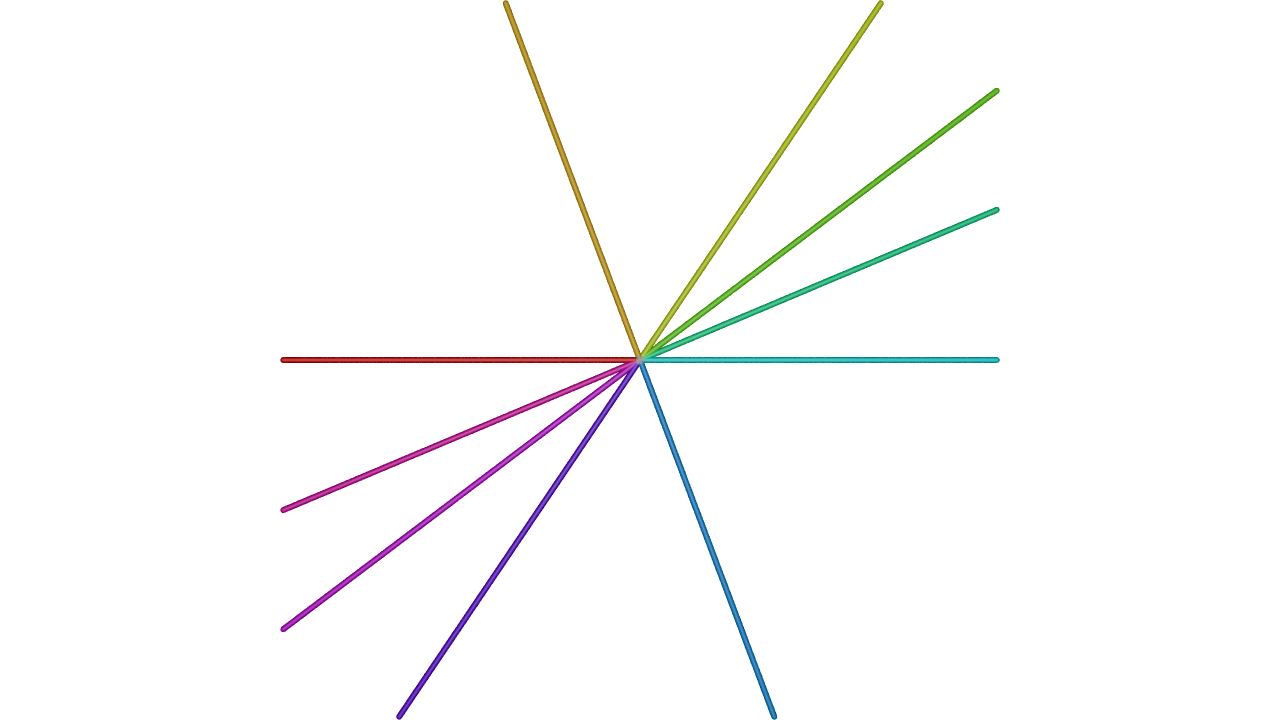}
\captionsetup{margin=-0pt, singlelinecheck=false, justification=centering}
\caption*{$\psi_1(x,y)=x+y+\ii y$}
\end{minipage}
\caption{Level lines of the phase functions $\phi_i=\psi_i/|\psi_i|$. The phase change of  $\psi_1$ around the origin is different from that of $\psi_0$, producing the \emph{sheared} phase level sets.   }
\label{fig:phase_lines}
\end{figure}
\end{example}

Although the ambient manifolds \(M = \RR^2\) for both examples are non-compact, this non-compact assumption is not essential for the construction.
For instance, on \(M = \SS^2\), regarded as the one-point compactification of \(\RR^2\), the same choice of pairs \(\psi_0, \psi_1\) as in \autoref{eg:non_transitivity_twist} and \autoref{eg:non_transitivity_conformal} works, with the modification that \(|\psi_0|\), \(|\psi_1| \to 1\) at infinity so that they are smooth functions on $\SS^2$.

We now characterize the geometry of each fiber \(\Pi^{-1}\gamma\) by studying the behavior of the \(\DC_\gamma\) action on the fiber.

\subsubsection{A Transitive Action for each Fiber \(\Pi^{-1}\gamma\)}
The fiber \(\Pi^{-1}\gamma\subset \cF\) based at \(\gamma\in\cO\) consists of complex-valued functions \(\psi\) with the zero set \(\gamma\).
Given two functions \(\psi_0, \psi_1\) in the same fiber, \ie\@ sharing the same oriented zero set \(\gamma\), it is not \emph{a priori} obvious how they are related to each other.  For example, even when \(\psi_0,\psi_1\) share the same zero set, \(\psi_1/\psi_0\) is generally undefined at their common zero set (\autoref{eg:non_transitivity_conformal}).
We therefore have the following nontrivial classification of elements in \(\Pi^{-1}\gamma\).
\begin{definition}
    Two functions \(\psi_0, \psi_1\in\Pi^{-1}\gamma\) are said to be in the same \emph{conformal class} if \(\psi_1 = \tau\psi_0\) for some \(\tau\in C^\infty(M;\CC^\times)\).
\end{definition}
Intuitively, \(\psi_0\) and \(\psi_1\) are in the same conformal class if they share the same shear states (\ie, the rate of phase change around the zeros) so that \(\psi_1/\psi_0\) smoothly exists on the zeros.

The functions in \(\Pi^{-1}\gamma\) encode not only the zero set \(\gamma\) but also a frame over \(\gamma\): Any regular level set of the phase map \(\phi= \psi/|\psi|\colon M\setminus\gamma\to\SS^1\) yields a distinguished direction field at \(\gamma\) that defines a \emph{ribbon} over the codimension-2 submanifold \(\gamma\) (\autoref{fig:ribbon}).  Intuitively, two ribbons can be flown to each other by a diffeomorphism in \(\Diff_\gamma(M)\)  if an donly if the ribbons share the same total twist. The following \emph{twist class} characterizes this global topological type of the ribbon data within \(\psi\).
\begin{definition}
    Two elements \(\psi_0,\psi_1\in\Pi^{-1}\gamma\) are in the same \emph{twist class} if there exists \(f\in\Diff_\gamma(M)\) such that \(\psi_1 = \psi_0\circ f\).
\end{definition}

\begin{figure}[htbp]
\centering
\includegraphics[width=0.5\textwidth,, trim={4cm 4cm 4cm 4cm},clip]{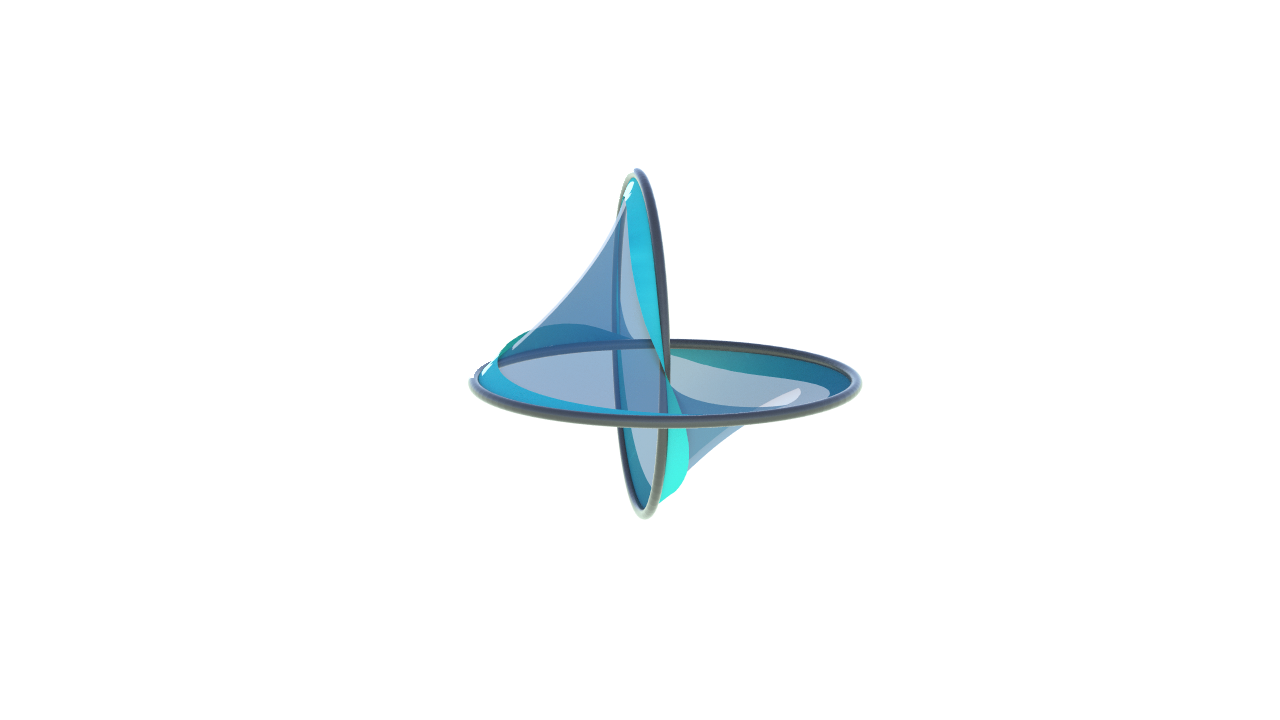}
\caption{A ribbon of an implicit representation $\psi$ for the Hopf link $\gamma$. For each regular value $s\in\SS^1$ of the phase field \( \phi = \psi/|\psi| \), the ribbon \( R_s \) (opaque cyan) is defined as the intersection of \( \phi^{-1}(s) \) (translucent blue) and a small tubular neighborhood of \( \im\gamma \).}
\label{fig:ribbon}
\end{figure}

\begin{proposition}\label{prop:conformal_class_in_fiber}
    Let \(\psi_0,\psi_1\in\Pi^{-1}\gamma\), then there exist \(\tilde\psi_0, \tilde\psi_1\in\Pi^{-1}\gamma\) within the twist classes of $\psi_0$ and $\psi_1$,
    such that \(\tilde\psi_0,\tilde\psi_1\) are in the same conformal class. Equivalently, $\psi_1\circ f=\tau\psi_0$ for some $f\in \Diff_\gamma(M)$ and $\tau \in C^\infty(M;\CC^\times)$.
\end{proposition}
The following is an immediate consequence.
\begin{corollary}\label{cor:TransitiveActionOnFiber}
    The group \(\Diff_\gamma(M)\ltimes C^\infty(M;\CC^\times)\) acts on \(\Pi^{-1}\gamma\) transitively.
\end{corollary}
To prove \autoref{prop:conformal_class_in_fiber},  we explicitly construct $\tilde \psi_1\coloneqq \psi_1\circ f\in\Pi^{-1}\gamma$ with some $f\in \Diff_\gamma(M)$ such that
$ \tilde \psi_1=\tau \psi_0$ for some $\tau \in C^\infty(M; \CC^\times)$ with the following strategy. As we observed, the obstacle for the existence of a smooth quotient function is that $\psi_0$ and $\psi_1$ may have different sheared states (rates of the phase change around the zero sets), which can be remedied using a diffeomorphism on $M$ as in \autoref{eg:non_transitivity_conformal}.
Similarly, to show \autoref{prop:conformal_class_in_fiber}, we will find a diffeomorphism $f\in \Diff_\gamma(M)$ which adjusts the sheared state of $\psi_1$ to that of $\psi_0$, while $f=\id$ away from $\gamma$.

\begin{proof}
Let $\psi_0, \psi_1\in\Pi^{-1}\gamma$.
Take an arbitrary Riemmanian metric on $M$ and a small tubular neighborhood $B$ of $\gamma$. 
Since $d\psi_0$ is surjective on $\gamma=\psi_0^{-1}(0)$, the inverse function theorem ensures that  $\psi_0$ is injective  on each normal disc within $B$.

This allows us to take coordinates  $(s,z)$ on a sub-neighborhood $B_\epsilon:=B\cap \{|\psi|<\epsilon\}$  for a small $\epsilon>0$, where $s\in\gamma$ and $z$ is given by the value of $\psi_0$.
By design $\psi_0^s:=\psi_0(s,\cdot):D_\epsilon \to D_\epsilon$ for each $s\in \gamma$ is the identity map on the disc $D_\epsilon\subset \CC$.  


 Let $\psi_1^s:=\psi_1(s,\cdot):D_\epsilon \to \CC$ for each $s$. Since $\psi_1$ lies in the same fiber as $\psi_0$, we have $\det d\psi_1^s>0$ on $D_\epsilon$, provided that $\epsilon$ is sufficiently small. In particular, $\psi_1^s$ is an orientation-preserving diffeomorphism between $D_\epsilon$ and $\psi_1^s(D_\epsilon)$. 

It follows that there is a family of diffeomorphisms $\{f^s\}_{s\in \gamma}$ on $D_\epsilon$
such that $\psi_1^s\circ f^s (z)= c_s z=c_s \psi_0^s(z)$ with some constant $c_s \in \CC^\times$ on the small disc $D_{\epsilon/3}$, and $f^s=\id$ outside the larger disc $D_{2\epsilon/3}$, as illustrated in \autoref{fig:psi0 to psi1 f}. Here $f_s$ and  $c_s$  depend smoothly on $s\in \gamma$.
 Define $f$ using $\{f^s\}_{s\in \gamma}$ in the tubular neighborhood $B_\epsilon$ and set $f=\id$ outside $B_\epsilon$. Finally, we set $\tilde\psi_1\coloneqq \psi_1\circ f$. Clearly,  $\tilde \psi_1(s,z)/\psi_0(s,z)=c_s$ on $ B_{\epsilon/3}\cong \gamma\times D_{\epsilon/3}$. This shows the existence of a quotient function $\tau\in C^\infty(M;\CC^\times)$ such that $\tilde \psi_1=\tau \psi_0$.

\begin{figure}[htbp]
\centering
\begin{minipage}[b]{0.32\columnwidth}
\includegraphics[width=\textwidth,trim={5cm 0 5cm 0},clip]{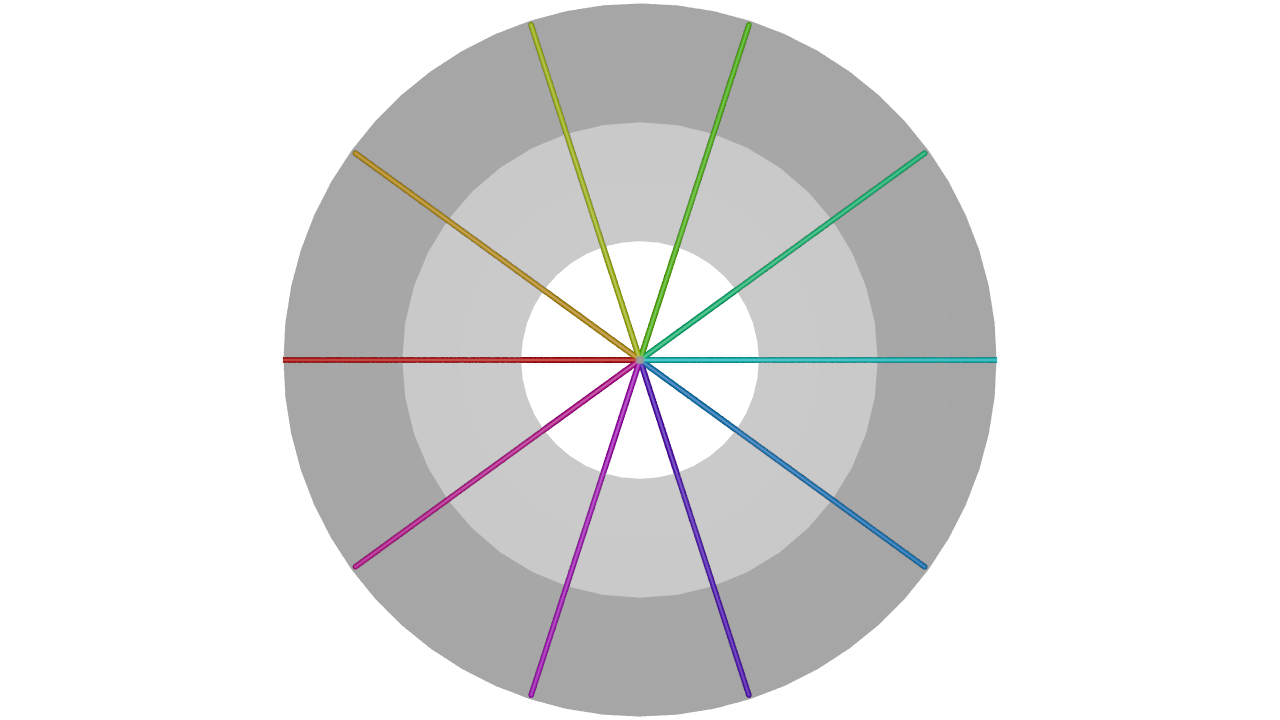}
\captionsetup{margin=-0pt, singlelinecheck=false, justification=centering}
\caption*{ $\psi^s_0(x,y)=x+\ii y$}
\end{minipage}
\begin{minipage}[b]{0.32\columnwidth}
\includegraphics[width=\textwidth,trim={5cm 0 5cm 0},clip]{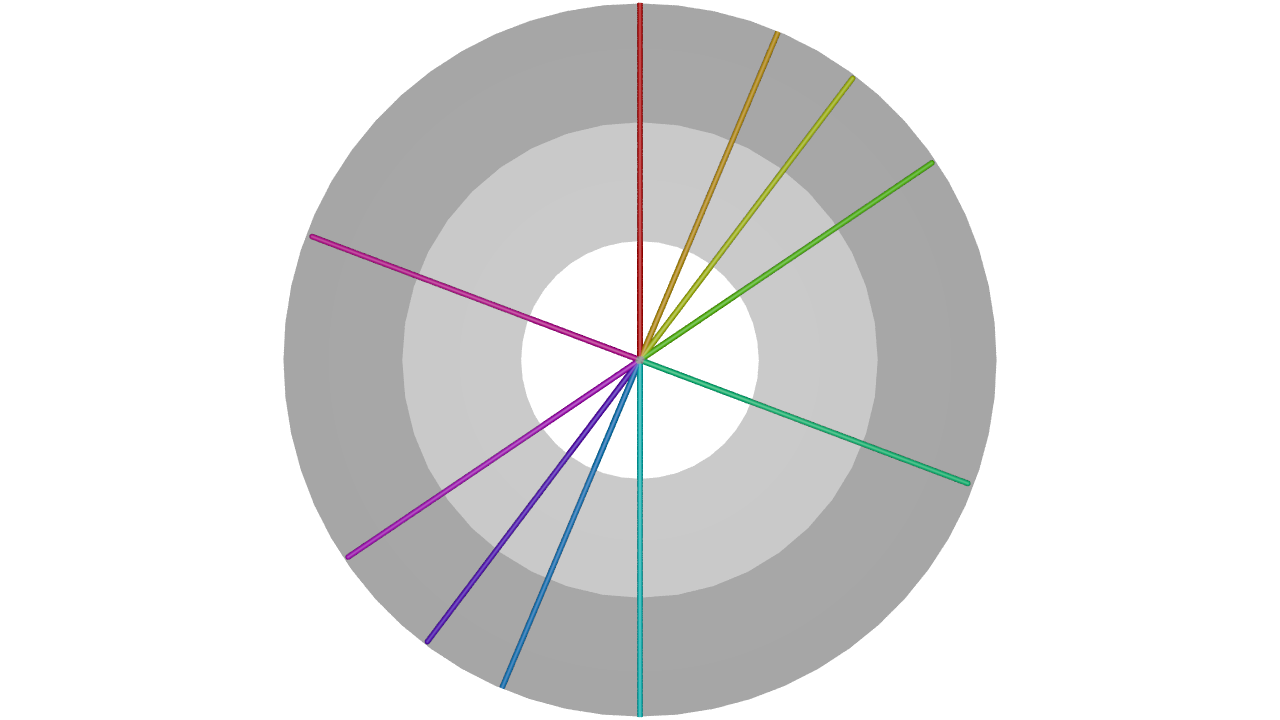}
\captionsetup{margin=-5pt, singlelinecheck=false, justification=centering}
\caption*{$\psi^s_1(x,y)=x+\ii x+\ii y$}
\end{minipage}
\begin{minipage}[b]{0.32\columnwidth}
\includegraphics[width=\textwidth,trim={5cm 0 5cm 0},clip]{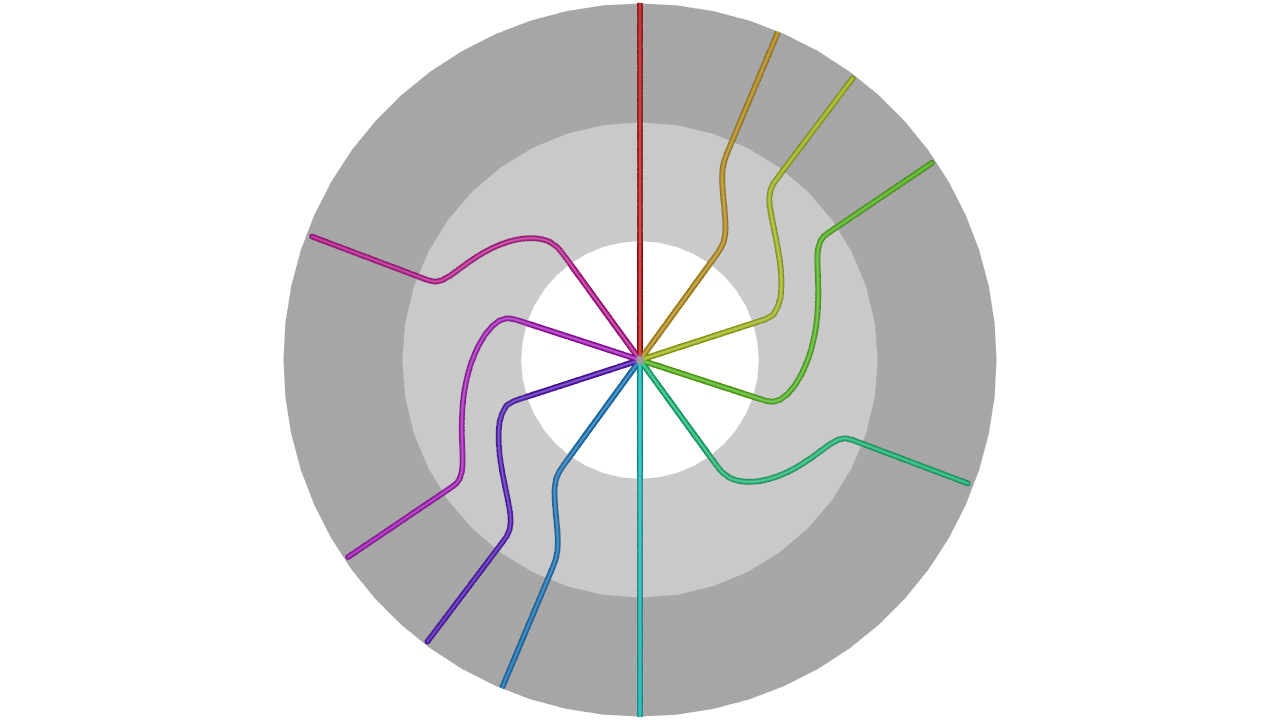}
\captionsetup{margin=40pt, singlelinecheck=false, justification=centering}
\caption*{ $\hat \psi_1^s= \psi^s_1\circ f^s$}
\end{minipage}
\caption{Level lines of the phase maps $\phi=\psi/|\psi|$ on the disc $D_\epsilon$. On the small disc $D_{\epsilon/3}$ colored in white, the sheared states (\ie, the rates of the phase change around the zero), of $\psi_0^s$ (left) and $\hat \psi_1^s\coloneqq \psi^s_1\circ f^s$ (right) coincide, and hence $ \hat \psi_1^s=c_s \psi_0^s$ where $c_s=e^{-\frac{\pi}{2}\ii}$. In the outer region $D_{\epsilon}\setminus D_{2\epsilon/3}$ colored in dark gray, $\hat \psi_1^s$ (right) agrees with $\psi_1^s$ (middle): namely $f^s=\id$ on $D_{\epsilon}\setminus D_{2\epsilon/3}$.}
\label{fig:psi0 to psi1 f}
\end{figure}
\end{proof}
The family nonzero complex numbers $\{c_s\}_{s\in \gamma}$ in the proof may not be taken as a constant independent of $s\in\gamma$ in general. This relates to the cohomology class in $H^1(M;\ZZ)$, corresponding the quotient function $\tau=\psi_1\circ f/\psi_0$. We discuss this now.

\subsubsection{Connected Components of each Fiber \(\Pi^{-1}\gamma\)}\label{sec:ConnectedComponentsOfEachFiber}

Notice the distinction between \(\Diff_\gamma(M)\ltimes C^\infty(M;\CC^\times)\) in \autoref{cor:TransitiveActionOnFiber} and the group \(\DC_\gamma = \Diff_\gamma(M)\ltimes \Exp(C^\infty(M;\CC))\).  The gauge group \(\Exp(C^\infty(M;\CC))\) in \(\DC_\gamma\) is a subgroup of the multiplicative group \(C^\infty(M;\CC^\times)\) of non-vanishing functions.
More precisely, \(\Exp(C^\infty(M;\CC))\) is the connected component of \(C^\infty(M;\CC^\times)\) that contains the identity, and they are related by the following short exact sequence:
\begin{align}
\label{eq:ExpShortExactSequence}
    \xymatrix{
    0\ar[r]
    &
    \Exp(C^\infty(M;\CC))
    \ar@{^{(}->}[r]
    &
    C^\infty(M;\CC^\times)
    \ar[r]^{\frak h}
    &
    H^1(M;\ZZ)
    \ar[r]
    &0
    }
\end{align}
where \(H^1(M;\ZZ)\) is the first integer coefficient cohomology of \(M\), and the map \({\frak h}\) is given by
\begin{align}
\label{eq:frakhDefinition}
    {\frak h}(\tau)(C) = \oint_{C}{1\over 2\pi\ii}{d\tau\over\tau},\quad \tau\in C^\infty(M;\CC^\times),\quad [C]\in H_1(M;\ZZ).
\end{align}
One can check that \eqref{eq:ExpShortExactSequence} is an exact sequence:  For each non-vanishing function \(\tau\in C^\infty(M;\CC^\times)\), one can construct its logarithm \(\varphi\in C^\infty(M;\CC)\) (for \(\tau = e^\varphi\)) locally by \(\varphi(x) = \int_{x_0}^x {d\tau\over\tau}\), \(x\in M\), with some fixed point \(x_0\in M\); this construction works globally, \ie\@ \(\int_{x_0}^x\) is independent of the path, if and only if \eqref{eq:frakhDefinition} vanishes for all loops \(C\).

The short exact sequence \eqref{eq:ExpShortExactSequence}, together with the fact that \(\Exp(C^\infty(M;\CC))\) is the identity component of \(C^\infty(M;\CC^\times)\), implies that the group of connected components is given by
\begin{align}
    C^\infty(M;\CC^\times)/\Exp(C^\infty(M;\CC)) \cong H^1(M;\ZZ).
\end{align}
Similarly, the component group \((\Diff_\gamma(M)\ltimes C^\infty(M;\CC^\times))/\DC_\gamma\) of \(\Diff_\gamma(M)\ltimes C^\infty(M;\CC^\times)\) is also given by \(H^1(M;\ZZ)\), because the diffeomorphism action on \(C^\infty(M;\CC^\times)\) preserves the components.
In particular, \autoref{prop:tangent space of F is fundamental vector fields} implies the following:
\begin{corollary}\label{cor:DCgamma is transitive on fiber}
    Two elements \(\psi_0,\psi_1\in\Pi^{-1}\gamma\) are in the same connected component of \(\Pi^{-1}\gamma\) if and only if \(\psi_1\) is in the \(\DC_\gamma\) orbit of \(\psi_0\).  In particular, \(\DC_\gamma\) acts transitively on each connected component of \(\Pi^{-1}\gamma\).
\end{corollary}

Each connected component of \(\Pi^{-1}\gamma\) can be explicitly quantified as follows. 
Let $\beta$ be the first Betti number of $M$ and  \(C_1,\ldots, C_{\beta}\)  be loops in \(M\) that form a set of generators of \(H_1(M;\ZZ)\) which do not intersect \(\gamma\). The existence of such loops is ensured by a dimension argument: \(\gamma\) is codimension-2 and each \(C_j\) is one-dimensional. 

For each \(\psi\in \Pi^{-1}\gamma\), define \(n_j(\psi) = {1\over 2\pi\ii}\oint_{C_j}{d\psi\over\psi}\in\ZZ\), \(j=1,\ldots,\beta \). Note that the 1-form \({d\psi\over\psi}\) is well-defined along each \(C_j\) thanks to \(C_j\) not intersecting \(\gamma\).  The integer array \(\bn(\psi) = (n_1(\psi),\ldots,n_{\beta }(\psi))\in\ZZ^{\beta }\) is a coordinate for an affine space associated to the module \(H^1(M;\ZZ)\).  The value of \(\bn\) is invariant under the \(\DC_\gamma\) action, and is shifted freely and transitively by the action \(\psi\mapsto\tau\psi\) for \(\tau\in C^\infty(M;\CC^\times)/\Exp(C^\infty(M;\CC))\cong H^1(M;\ZZ)\). Summarizing these arguments, the following holds.
\begin{proposition}\label{prop:H1Z action on fiber}
    For each $\gamma\in\cO$, the discrete group $H^1(M;\ZZ)$ acts on $\Pi^{-1}\gamma/\DC_\gamma$ freely and transitively. 
\end{proposition}

\subsubsection{Transitivity of \(\DC\) Action on the Bundle \(\cF_\cO\)}
In \autoref{sec:ConnectedComponentsOfEachFiber}, we see that when \(M\) is not simply connected, the subgroup \(\DC_\gamma\) does not act transitively on the entire fiber \(\Pi^{-1}\gamma\).  The group \(\DC_\gamma\) fixes the base point \(\gamma\), which obstructs access to all connected components in the fiber.  If we consider instead the action by the full group \(\DC\), which allows the base \(\gamma\) to move by the action, then we can access different components of the fiber. 

\autoref{fig:DC_orbit_U} illustrates this for the case of \(\dim(M)=2\).  By moving around zeros and performing gauge transformation with functions in \(\Exp(C^\infty(M;\CC))\), we can add nontrivial cohomology component to \(\psi\in\cF_\cO\), \ie\@ move \(\psi\) to a different connected component of the fiber.

We expect that a similar argument holds in higher dimensions and the result remains valid:
\begin{conjecture}\label{conj:F_equals_U}
The $\DC$ action is transitive on the fiber bundle  $\cF_\cO\coloneqq \Pi^{-1}\cO$ over a $\Diff_0(M)$-orbit $\cO$ in $\on{Gr}_{m-2}^{\on{ex}}(M)$. Namely, $\cF_\cO$ is a connected space.
\end{conjecture}

\begin{figure}[htbp]
\centering
\begin{minipage}{0.28\columnwidth}
\caption*{$\psi_0$}
\includegraphics[width=\textwidth, trim={0 -1cm 0 0},clip]{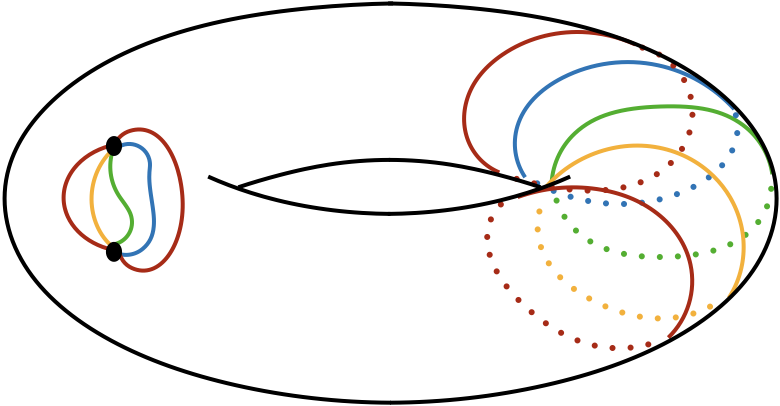}
\end{minipage}
\quad
\begin{minipage}{0.28\columnwidth}
\caption*{$\psi_{t_1}$}
\includegraphics[width=\textwidth, trim={0 -1cm 0 0},clip]{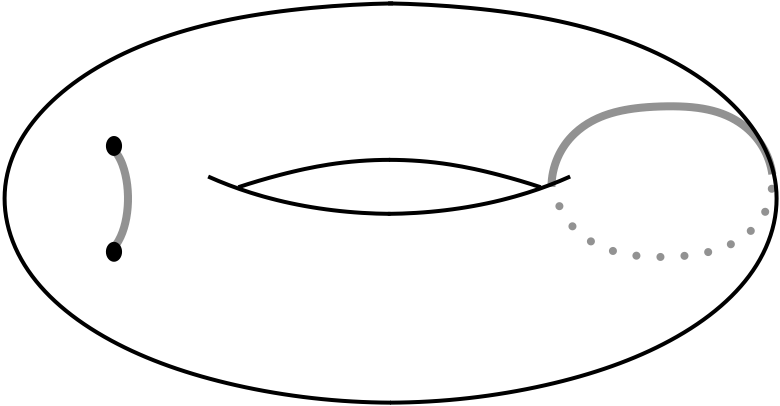}
\end{minipage}
\quad
\begin{minipage}{0.28\columnwidth}
\caption*{$\psi_{t_2}$}
\includegraphics[width=\textwidth, trim={0 -1cm 0 0},clip]{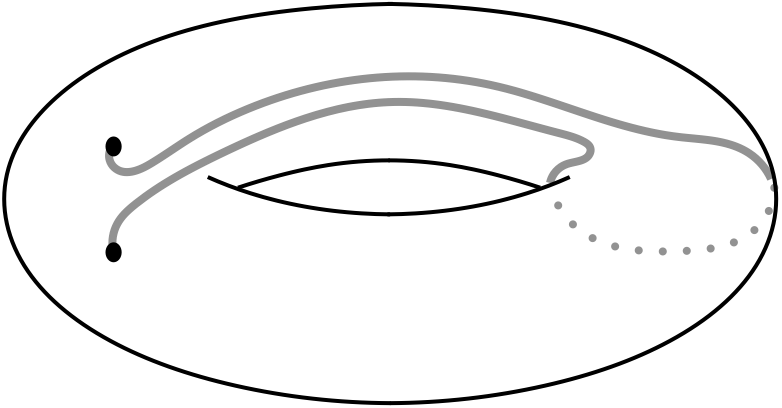}
\end{minipage}
\begin{minipage}{0.28\columnwidth}
\caption*{$\psi_{t_3}$}
\includegraphics[width=\textwidth, trim={0 0 0 0},clip]{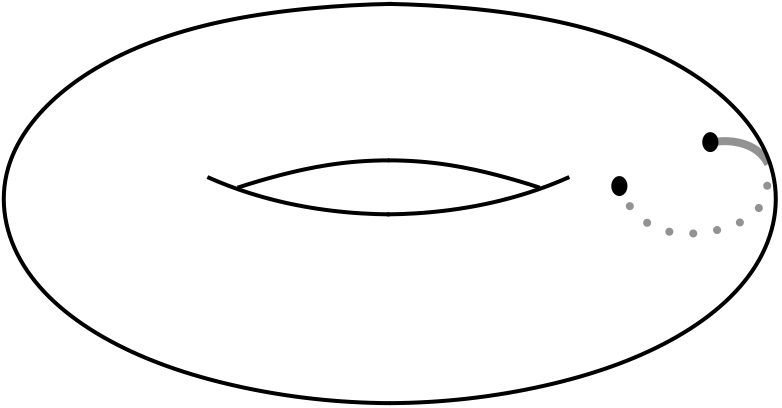}
\end{minipage}
\quad
\begin{minipage}{0.28\columnwidth}
\caption*{$\psi_{t_4}$}
\includegraphics[width=\textwidth, trim={0 0 0 0}, clip]{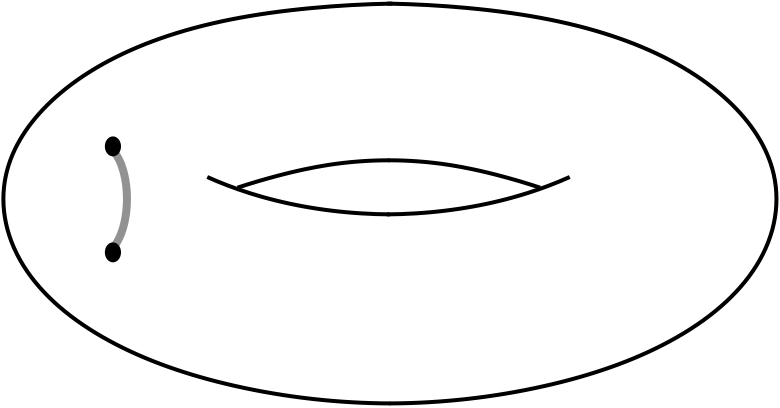}
\end{minipage}
\quad
\begin{minipage}{0.28\columnwidth}
\caption*{$\psi_1$}
\includegraphics[width=\textwidth, trim={0 0 0 0},clip]{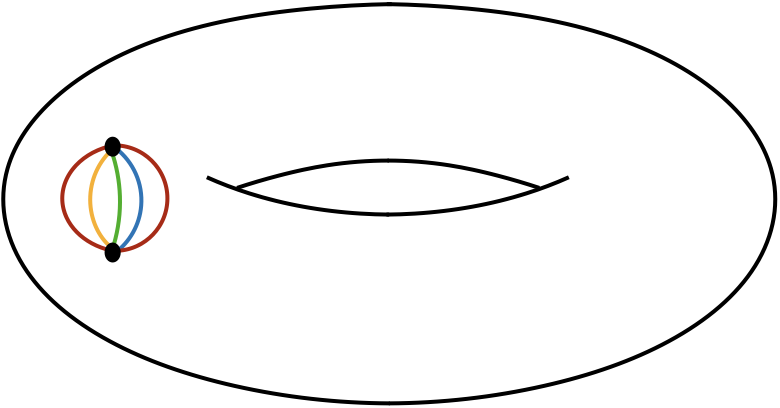}
\end{minipage}
  \caption{A transition of the implicit representation from $\psi_0$ (top left) to $\psi_1$ (bottom right) for two points $\gamma$ on the torus $\TT^2$. In both $\psi_0$ and $\psi_1$, the colored curves represent some level sets of the phase map $\phi\coloneqq \frac{\psi}{|\psi|}:M\setminus\gamma\to \SS^1$. Clearly, $\psi_0$ and $\psi_1$ belong to different connected components of the fiber $\pi^{-1}\gamma$. 
Nevertheless, they lie on the same $\DC$-orbit. Starting from $\psi_0$, we first multiply by some $e^{\varphi_1} \in \Exp(\CC^\infty(M;\CC))$ to \emph{compress} the support of $d\phi$ into narrow bands, visualized as thick gray curves in $\psi_{t_1}$. 
Next, multiplying $\psi_{t_1}$ by $e^{\varphi_2} \in \Exp(\CC^\infty(M;\CC))$ merges two level curves, producing $\psi_{t_2}$. Applying a diffeomorphism $f_1$ moves the level lines, yielding $\psi_{t_3}$, and a second diffeomorphism $f_2$ transforms $\psi_{t_3}$ into $\psi_{t_4}$. Finally, multiplying by another $e^{\varphi_3} \in \Exp(\CC^\infty(M;\CC))$ decompresses the support of $d\phi$, resulting in $\psi_1$.}

\label{fig:DC_orbit_U}
\end{figure}

\section{Prequantum potential}\label{sec:prequantum_potential}

In this section, we present one of our main results: the MW symplectic form $\omega_{\rm{MW}}$ on $\cO$ over a general closed manifold $M$ admits a symplectic potential in the \emph{prequantum sense}. 

\begin{definition} 
Define the following 1-form on  \(\cF\):
\begin{align}\label{eq:Th_integral} \Th_\psi(\dot \psi) = {1\over 2\pi|M|} \int_{M} \Im  \frac{\dot \psi \bar\psi}{|\psi|^2} \  \mu, \quad \dot \psi \in T_\psi \cF.
\end{align}
\end{definition}

The integral in \eqref{eq:Th_integral} may appear divergent as the integrand is unbounded near the zeros of \(\psi\), but we will show that it is finite and has a geometric interpretation. 

\begin{definition}[Formal prequantization and prequantum potential]\label{def:formal_prequantization}
    Let $(B, \beta)$ be a symplectic manifold, and let $E$ be a manifold equipped with a 1-form $\alpha$. We say that a fibration $\pi \colon E \to B$ is a \emph{formal prequantization}, and that $\alpha$ is a \emph{formal prequantum potential} of $\beta$, if
\begin{align}\label{eq:prequantum_condition}
        d\alpha = \pi^* \beta.
    \end{align}
\end{definition}

\begin{theorem}[Formal prequantization of the MW structure] \label{th:quantum_potetial}
    Suppose $M$ is a closed manifold equipped with a volume form $\mu$.
    Then the fibration $\Pi \colon (\cF_\cO, \Th) \to (\cO, \omega^{\rm MW})$ is a formal prequantization.
\end{theorem}
To prove the theorem, that is, to show \(d\Th = \Pi^*\omega_{\rm{MW}}\), compute
\begin{align}\label{eq:exterior_derivative_in_U}
d\Theta_\psi(Y_\psi^{(u,a)},Y_\psi^{(v,b)})
= \cL_{Y^{(u,a)}} \iota_{Y^{(v,b)}} \Th_\psi - \cL_{Y^{(v,b)}} \iota_{Y^{(u,a)}} \Th_\psi - \iota_{[Y^{(u,a)}, Y^{(v,b)}]} \Th_\psi
\end{align}
using the representation in terms of fundamental vector fields $Y^{(u,a)}, Y^{(v,b)}\in \Gamma(T\cF)$ (\autoref{def:FundamentalVectorFieldY}).

To explicitly evaluate each term of \eqref{eq:exterior_derivative_in_U}, it is convenient to express \eqref{eq:Th_integral} in terms of the differential of the \(\SS^1\)-valued function \(\arg(\psi)\).  This differential of the phase is defined as follows.

\begin{definition}[Phase 1-form]
\label{def:circle_differential}
Using the phase map $\phi:M\setminus\psi^{-1}(0)\to \SS^1$ given by $\phi=\psi/|\psi|$  
and the standard Haar measure \(\sigma\in\Omega^1(\SS^1)\) on \(\SS^1\) with normalization \(\int_{\SS^1}\sigma = 2\pi\), define \emph{the phase 1-form \(\lambda_{\psi}\in \Omega^1(M\setminus\psi^{-1}(0))\)} by
\begin{align}
    \lambda_\psi \coloneqq \phi^*\sigma.
\end{align}
This induces the map $\Lambda\colon \cF\to \Omega^{m-1}(M)^*$ by
\begin{align}\label{eq:integral_lambda_psi}
    \langle \Lambda(\psi), \beta \rangle\coloneqq
    \int_{M} \lambda_\psi \wedge \beta, 
    \quad \beta \in \Om^{m-1}(M).
\end{align}
We call $\Lambda(\psi)$ \emph{the phase current}. 
\end{definition}
The phase 1-form \(\lambda_\psi\) represents the gradient of the phase \(\arg(\psi)\) of \(\psi\): On each simply-connected subset \(U\subset M\setminus\psi^{-1}(0)\), express \(\psi\) as \(\psi(x) = r(x)\cdot e^{\ii\vartheta(x)}\) with some \(r\colon U\to\RR_{>0}\) and single-valued function \(\vartheta\colon U\to\RR\); then \(d\vartheta = \lambda_{\psi}\).




\begin{proposition}\label{prop:circle_differential}

For $\psi\in \cF$ and $\beta\in \Omega^{m-1}(M)$, it holds
    \begin{align}\label{eq:circle_phase_integral}
        \langle \Lambda(\psi), \beta \rangle =\int_{\SS^1} \left(\int_{\phi^{-1}(\cdot) } \beta \right)\sigma
    \end{align}
    where $\phi=\psi/|\psi|:M\setminus\psi^{-1}(0)\to \SS^1$ is the phase map of $\psi$.
\end{proposition}
The integral \eqref{eq:circle_phase_integral} makes sense as Sard's theorem asserts that the integrand  is defined for $\sigma$-almost every element of $\SS^1$.  
\begin{proof}
    The stated expression is easily verified using the coarea formula \cite[Chapter 1.3]{demailly1997complex}.
\end{proof}

We now  express the term $\Th_\psi(Y^{(u,a)}) = \Th_\psi(-\cL_u \psi + a\psi)$ more explicitly.

\begin{lemma}[Evaluation of $\Th_\psi$]\label{lem:value_of_Theta}
For any $(u,a) \in \AlgDC$, we have
\begin{align}
   \Th_\psi(-\cL_u \psi + a\psi)
   &= \Th_\psi(-\cL_u \psi) +  \Th_\psi(a \psi) \\
   &= {1\over 2\pi|M|} \left( - \langle \Lambda(\psi), \iota_u \mu \rangle + \int_M \Im a \ \mu \right),\label{eq:evaluation_of_Th}
\end{align}
where $\Lambda(\psi)$, defined in \eqref{eq:integral_lambda_psi}, is the current associated to the circle differential of \(\psi\), and \(\mu\) is the volume form on \(M\).
\end{lemma}

\begin{proof}
A direct calculation yields
$\Th_{\psi}(a\psi)
        = {1\over 2\pi |M|} \int_M \Im \frac{a\psi \bar\psi}{ |\psi|^2}\mu
        = {1\over 2\pi |M|}  \int_M \Im a\ \mu.$
%
We then compute  $\Th_{\psi}(-\cL_u \psi)$. From \autoref{def:circle_differential}, it is straightforward to compute
\begin{align}
    \Im \frac{d\psi}{\psi}=\lambda_\psi\quad\text{on \(M\setminus \psi^{-1}(0)\)}.
\end{align}
Therefore, 
\begin{align}
     \Th_{\psi}(-\cL_u \psi)
&=- {1\over 2\pi|M|} \int_M \Im \frac{\iota_u d \psi  \cdot \bar \psi} {|\psi^2|}\mu
= - {1\over 2\pi|M|} \int_M \iota_u \lambda_\psi \ \mu
\\& = - {1\over 2\pi|M|} \int_M \lambda_\psi \wedge \iota_u \mu
=- {1\over 2\pi|M|}  \langle\Lambda(\psi), \iota_u \mu\rangle.
\end{align}
\end{proof}

\begin{remark}[Geometric interpretation of $\Th_\psi$]\label{rmk:geometric_meaning_of_Th}
\autoref{prop:circle_differential} and \autoref{lem:value_of_Theta} show that $\Th(\dot \psi)$ is finite for any $\dot \psi\in T_\psi \cF$. 
This also tells a geometric insight. Recall that each regular value $s\in \SS^1$ of the phase map \(\phi=\psi/|\psi|\) defines a hypersurface \(\phi^{-1}(s)\) in $M$.
The formula $\Th(-\cL_u \psi)={1\over 2\pi |M|} \int_{\SS^1} \left(\int_{\phi^{-1}(\cdot)}\iota_u\mu \right)\sigma $ describes that \(\Theta(-\cL_u\psi)\) measures the flux \(\int_{\phi^{-1}(s)}\iota_u\mu\) of the vector field \(u\) through the phase level hypersurface \(\phi^{-1}(s)\), averaged over \(s\in\SS^1\).  

On the other hand, the formula \(\Theta(a\psi) = {1\over 2\pi |M|}\int_M\Im a\, \mu\) captures the infinitesimal phase shift of \(\psi\) over the space.


Together, \(\Th_\psi(\dot \psi) = \Theta_\psi(-\cL_u\psi + a\psi)\) combines both contributions of the infinitesimal phase shift and the flux of vector fields through phase hypersurfaces. In other words, it is the average of the infinitesimal swept volume by the $\SS^1$-family of hypersurfaces $\{\phi^{-1}(s)\}_{s\in\SS^1}$. 
In particular, the integral \(\int_{\Psi}\Theta\) of the 1-form \(\Theta\) along any path \(\Psi\colon [0,T]\to \cF\), \(\Psi(t)\eqqcolon\psi_t\), is the signed volume swept out by the hypersurfaces \(\{\phi_t^{-1}(s)\}_{s\in\SS^1}\) over \(t\in[0,T]\), averaged over \(s\in\SS^1\).


\end{remark}

Next, we need an auxiliary result describing how $\Th$ varies under the $\DC$-action. 

\begin{lemma}\label{lem:theta_under_S_action}
Let $\psi\in\cF$, $(f,e^\varphi)\in\DC$, and $\hat \psi\coloneqq (f,e^\varphi)\rhd \psi=\psi\circ f^{-1}\cdot e ^\varphi$, For  the fundamental vector field of $Y^{(u,a)}\in \Gamma(T\cF)$ of $(u,a)\in \AlgDC$, we have 
\begin{align}
    \Theta_{\hat \psi}(Y^{(u,a)}_{\hat\psi})
    = {1\over 2\pi|M|} \left( -\int_M \iota_u ({f^{-1}}^* \lambda_\psi + \Im d\varphi) \mu + \int_M a\,\mu \right).
\end{align}

    
\end{lemma}

\begin{proof}
It is straightforward to verify $ \Th_{\hat\psi}(a \hat \psi)=\int \Im a \ \mu$.
We now compute  $ \Th_{\hat\psi}(-\cL_u \hat \psi)$.
From 
    \begin{align}
        d\hat \psi
        &= d(\psi\circ {f^{-1}} \cdot e^\varphi)
        =e^\varphi \cdot d(\psi\circ {f^{-1}} ) + \psi\circ {f^{-1}} \cdot  e^\varphi d\varphi
        =e^\varphi d(\psi\circ {f^{-1}} ) + \hat \psi d\varphi
    \end{align}
    it follows that, 
    \begin{align}
 \Th_{\hat \psi}(-\cL_u\hat\psi)
&= -{1\over 2\pi|M|} \int_M \Im \frac{\iota_u d\hat\psi \cdot \bar{\hat \psi}}{ |\hat \psi|^2}\mu
        =-{1\over 2\pi|M|}\int_M \Im \frac{e^\varphi \iota_u d(\psi\circ f^{-1})  + \bar{\hat \psi} \hat \psi  \iota_u d\varphi}{ |\hat \psi|^2}\mu\\
        &=-{1\over 2\pi|M|}\int_M \Im \frac{\bar{\hat \psi} \hat \psi i \iota_u  \lambda_{\psi\circ f}+|\hat \psi|^2 \iota_u d\varphi}{|\hat \psi|^2}\mu
        =-{1\over 2\pi|M|}\int_M \left(\iota_u \lambda_{\psi\circ f}+\iota_u \Im d \varphi \right) \mu.
    \end{align}
    Following \autoref{def:circle_differential}, one obtains
     \(\lambda_{\psi\circ f^{-1}}= (\phi\circ f^{-1}) ^*\sigma=f^{-1\,*}\phi^*\sigma=f^{-1\,*}\lambda_{\psi}\) where $\phi=\psi/|\psi|$.
     This yields the stated expression.
    
    
\end{proof}

Using these results, we can now explicitly evaluate each term in \eqref{eq:exterior_derivative_in_U} and prove \autoref{th:quantum_potetial}.

\begin{proof}[Proof of \autoref{th:quantum_potetial}]
    First, we have 
    \begin{align}
       \Theta_\psi([Y^{(u,a)},Y^{(v,b)}])
       ={1 \over 2\pi |M|}\int_M  \left( -\iota_{[u,v]}\lambda_\psi +\cL_u \Im b -\cL_v \Im a\right) \mu
    \end{align}
   using the expressions of $[Y^{(u,a)},Y^{(v,b)}]$  (\autoref{prop:Y anti homomorphism}) and of $\Theta_\psi$ (\autoref{lem:value_of_Theta}). 
    
    Next, we evaluate $\cL_{Y^{(u,a)}}\iota_{Y^{(v,b)}}\Theta_\psi$. 
    Let \(\Phi_{Y^{(u,a)}}^t\colon \cF\to\cF\) be the flow map generated by the vector field \(Y^{(u,a)}\), explicitly given by \(\Phi^t_{Y^{(u,a)}}(\psi)={\on{Fl}_u^{-t}}^*\psi\cdot e^{ta}\). Here \(\on{Fl}_u^t\colon M\to M\) is the flow map  generated by \(u\in \Gamma(TM)\), given as \({d\over dt}\on{Fl}_u^t = u\circ\on{Fl}_u^t, \on{Fl}_u^0 = \id_M\).  Applying \autoref{lem:theta_under_S_action} to the \(\DC\)-action \(\Phi_{Y^{(u,a)}}^t\) yields:
    \begin{align}
        \cL_{Y^{(u,a)}}\Theta(Y^{(v,b)}(\psi))
        &=\frac{d}{dt}\Big|_{t=0}\Theta(Y^{(v,b)}(\Phi^t_{Y^{(u,a)}}(\psi)))\\
        &={1 \over 2\pi |M|}\left(\frac{d}{dt}\Big|_{t=0}\int -\left(\iota_v {\on{Fl}_{u}^{-t}}^*\lambda_\psi + \iota_v d\Im (t a)\right) \mu + \int \Im b\, \mu \right) \\
        &={1 \over 2\pi |M|}\int \left(\iota_v \cL_u\lambda_\psi - \cL_v \Im a\right) \mu.
    \end{align}
    Similarly, \(\cL_{Y^{(v,b)}}\Theta(Y^{(u,a)}(\psi))={1 \over 2\pi |M|}\int \left(\iota_u \cL_v \lambda_\psi -\cL_u \Im b  \right)  \mu \).

    These expressions allow us to expand \eqref{eq:exterior_derivative_in_U}:
    \begin{align}
       d\Theta_\psi(Y^{u,a}, Y^{v,b})
       &=  \cL_{Y^{(u,a)}}\iota_{Y^{(v,b)}} \Theta_\psi - \cL_{Y^{(v,b)}} \iota_{Y^{(u,a)}}\Theta_\psi - \iota_{[Y^{(u,a)},Y^{(v,b)}]}\Theta_\psi\\
       &={1 \over 2\pi |M|}\int \Big(\iota_v \cL_u\lambda_\psi - \cL_v \Im a - \iota_u \cL_v \lambda_\psi + \cL_u \Im b \\
       &\qquad\quad\quad + \iota_{[u,v]}\lambda_\psi + \cL_v \Im a - \cL_u \Im b \Big) \mu\\
       &={1 \over 2\pi |M|}\int \left( \iota_v \cL_u\lambda_\psi - \iota_u \cL_v \lambda_\psi + \iota_{[u,v]}\lambda_\psi\right) \mu.
    \end{align}
    Using the formula $\iota_{[u,v]}=\cL_u\iota_v - \iota_v\cL_u $, the integrand can be further simplified as \(\iota_v \cL_u \lambda_\psi - \iota_u \cL_v \lambda_\psi +\iota_{[u,v]}\lambda_\psi =  \iota_v\iota_u d \lambda_\psi - d\iota_v \iota_u\lambda_\psi = \iota_v\iota_u d \lambda_\psi -0 \).
Hence we obtain by the product rule that, 
\begin{align}
     \iota_{Y^{(v,b)}} \iota_{Y^{(u,a)}}d\Theta_\psi 
    &= {1 \over 2\pi |M|}\int \iota_v\iota_u d \lambda_\psi \ \mu
    = -{1 \over 2\pi |M|}\int d \lambda_\psi \wedge \iota_u\iota_v\mu
    = - {1 \over 2\pi |M|}\int \lambda_\psi \wedge d\iota_u\iota_v\mu.  
\end{align}
Finally, we apply \autoref{prop:circle_differential} to get
\begin{align}
    \int \lambda_\psi \wedge d\iota_u\iota_v\mu 
     =  \int _{\SS^1}\left( \int_{\phi^{-1}(\cdot)} d\iota_u\iota_v\mu \right)\sigma
\end{align}
where the integrand is defined for almost every $s\in \SS^1$ due to Sard's theorem. By the Stokes theorem, we have
\begin{align}
     -{1 \over 2\pi |M|}\int _{\SS^1}\left( \int_{\phi^{-1}(\cdot)} d\iota_u\iota_v\mu \right)\sigma
     &=  -{1 \over 2\pi |M|}\int _{\SS^1}\left( \int_{\partial \phi^{-1}(\cdot)} \iota_u\iota_v\mu \right)\sigma
     = -{1 \over 2\pi |M|}\int _{\SS^1}\sigma \int_\gamma \iota_u\iota_v \mu \\
     & = {1 \over  |M|} \int_\gamma \iota_v\iota_u \mu
     =  \omega^{\rm MW}_\gamma(X^u_\gamma, X^v_\gamma)
\end{align}
where $\gamma=\Pi\psi$ and $X^u, X^v$ are the fundamental vector fields of $u,v\in\Gamma(TM)$ on $\cO=\Pi\cF$ (\autoref{def:FundamentalVectorFieldX}).
 Finally \autoref{prop:dPi} asserts  $d\Pi_\psi Y^{(u,a)}=X^u_{\Pi\psi}$ and $d\Pi_\psi Y^{(v,b)}=X^v_{\Pi\psi}$, showing $d\Theta_\psi(Y^{(u,a)}, Y^{(v,b)})=\omega_{\Pi\psi }^{\rm MW} (d\Pi_\psi Y^{(u,a)}, d\Pi_\psi Y^{(v,b)})$. This concludes the proof.
\end{proof}

 We have now shown that \(\Th\) is a formal prequantum potential for $\omega_{\rm{MW}}$. We conclude this section by noting that the formal prequantization \(\Pi\colon(\cF_\cO,\Th)\to (\cO,\omega)\) is not a prequantum bundle. This is because we cannot factor \(\Th\) onto \(\cO\), as the kernel of \(d\Pi\)  is not contained within the kernel of \(\Th\). For example, for \(\dot \psi = e^{ic_1} \psi\), \(\ring \psi = e^{ic_2} \psi \in T_\psi \cF_\cO\) with some real constants \(c_1 \neq  c_2\), we have \(d\Pi(\dot \psi) = d\Pi(\ring \psi) = 0 \in T_{\Pi\psi } \cO\), but the values \(\Th(\dot \psi) = \frac{  c_1}{2\pi}\) and \(\Th(\ring \psi) =  \frac{ c_2}{2\pi}\) are different.


However, it is still possible to take a quotient space of \(\cF_\cO\) where the prequantum form can descend onto. This construction actually leads to a prequantum bundle structure, as we will explain in the next section.

\section{Prequantum structure}\label{sec:prequantum_bundle}
We have shown that the fiberation $\Pi:(\cF_\cO,\Theta)\to(\cO,\omega^{\rm MW})$ is a formal prequantization \ie, $d\Theta=\Pi^*\omega^{\rm MW}$ (\autoref{th:quantum_potetial}). 
Building on this, we now construct a prequantum bundle as a quotient bundle of $\cF_\cO$, equipped with a connection form whose curvature form recovers the MW symplectic form on $\cO$.

Recall \autoref{def:GeneralizedPrequantumBundle} that a generalized prequantum bundle over a symplectic manifold \((B,\beta)\), where \(G = \SS^1\times H\) for some discrete Abelian group \(H\), is a principal \(G\)-bundle \(\pi\colon E\to B\) with a connection form \(\alpha\in\Omega^1(E)\) satisfying \(\pi^*\beta = d\alpha\).  As a principal bundle with connection, a generalized prequantum bundle comes with the notion of vertical and horizontal distributions.



\begin{definition}[Vertical and horizontal distributions, and horizontal lift]\label{def:unique_horizontal_lift}
    Let $\pi\colon (E,\alpha)\to (B,\beta)$ be a generalized prequantum bundle. At each point $x \in E$, the tangent space splits as \( T_x E = V_x E \oplus H_x E \), where the vertical distribution is defined by \( V_x E = \ker d\pi|_x \), and the horizontal distribution is given by \( H_x E = \ker \alpha|_x \). A \emph{horizontal lift} of a path \( \{ \gamma_t \}_t \subset B \) is a path \( \{ \ell_t \}_t \subset E \) satisfying \( \pi \circ \ell_t = \gamma_t \) and \( \partial_t \ell_t \in H_{\ell_t} E \) for all \( t \),
    which is unique up to the choice of the initial point \(\ell_0\).
\end{definition}

The formal prequantization  \( \Pi\colon (\cF_\cO, \Th) \to (\cO, \omega^{\rm MW}) \) is not a generalized prequantum bundle since the fibers are not homeomorphic to a one-dimensional group \(G\), and \( \Th \) is not a connection form.
In particular, the supposed horizontal subspace \(\ker\Th\) and the vertical subspace \(\ker d\Pi\) do not split \(T\cF_\cO\).
To define a bundle on which \( \Th \) becomes a genuine connection 1-form, we take the quotient of the tangent space at each \( \psi \in  \cF_\cO \) by the intersection \( \ker \Th \cap \ker d\Pi \). This quotient process is characterized by the following equivalence relation.

\subsection{Volume Bundle}

\begin{definition}[Volume class]
     Two elements $\psi_0,\psi_1\in \cF_\cO$ are \emph{volumetrically equivalent}, denoted by $ \psi_0\sim_\cP \psi_1$, if there exists a path $\{\psi_t\}_{t\in[0,1]} $ 
connecting $\psi_0$ and $\psi_1$ such that $\partial_t\psi_t\in \ker d\Pi_{\psi_t} \cap \ker\Theta_{\psi_t}$ for all $t\in[0,1]$. Each equivalence class $[\psi]_\cP$ is called \emph{a volume class}.
\end{definition}

Note that if \(\psi_0\sim_\cP\psi_1\), then they necessarily lie in the same fiber (\ie, \(\Pi\psi_0 = \Pi\psi_1\)) since they are connected by a path tangent to \(\ker d\Pi\).  In particular, \(\Pi\) descends to a well-defined map on the quotient space \(\cF_\cO/\sim_\cP\).

\begin{definition}[Volume bundle]
    Define the \emph{volume bundle} as the quotient space $\cP\coloneqq \cF_\cO / \sim_\cP$.  The projection \(\Pi_\cP\colon \cP\to\cO\) is defined as the map induced by \(\Pi\colon \cF_\cO\to\cO\).
\end{definition}

Note that the projection $\Pi \colon \cF_\cO \to \cO$ decomposes into two projections: the projection associated to the quotient $\pi_\cP \colon \cF_\cO \to \cP$, and $\Pi_\cP \colon \cP \to \cO$.

The tangent space \(T_{[\psi]_\cP}\cP\) of the volume bundle at each $[\psi]_\cP\in\cP$ is given by $T_{[\psi]_\cP}\cP=T_\psi \cF_\cO/\sim_\cP$ where the induced equivalence relation \(\sim_\cP\) on each tangent space is given by
\begin{align}\label{eq:SimPOnTangent}
    \dot\psi\sim_\cP \ring\psi\quad\text{if and only if}\quad\dot\psi-\ring\psi\in \ker d\Pi|_\psi\cap \ker \Th|_\psi.
\end{align}
This characterization of the tangent space involving \(\ker\Theta|_{\psi}\) implies that the 1-form \(\Th\in\Omega^1(\cF)\) descends to a well-defined 1-form \(\Th_\cP\in\Omega^1(\cP)\), with \(\Th = \pi_\cP^*\Th_\cP\).

Since $d\Th = \Pi^* \omega^{\rm MW}$, we also have $d\Th_\cP = \Pi_\cP^* \omega^{\rm MW}$. Therefore, the volume bundle $\Pi_\cP \colon (\cP, \Th_\cP) \to (\cO, \omega^{\rm MW})$ is a formal prequantization.

In addition, the volume bundle $\cP$ forms a principal $G$-bundle over $\cO$, where the structure group is $G = \SS^1 \times H^1(M; \ZZ)$, acting on $\cP$ as follows. First, define a circle action by constant phase shifts: 
\begin{align}
\label{eq:S1ActionOnF}
    e^{2\pi\ii c} \rhd [\psi]_\cP \coloneqq [e^{2\pi\ii c}\psi ]_\cP, \quad \text{for } \psi \in \cF, \; e^{2\pi\ii c} \in \SS^1.
\end{align}
This action is well-defined: For \(\psi_0\sim_{\cP}\psi_1\) connected by a path \(\{\psi_t\}_{t\in[0,1]}\) tangential to \(\ker d\Pi\cap\ker\Theta\), the shifted path \(\{e^{2\pi\ii c}\psi_t\}_{t\in[0,1]}\) is also tangential to \(\ker d\Pi\cap\ker\Theta\), implying that \([e^{2\pi\ii c}\psi_0]_\cP=[e^{2\pi\ii c}\psi_1]_{\cP}\).
The associated fundamental vector field of the action \eqref{eq:S1ActionOnF} is given by \(Z\colon\RR\to \Gamma(T\cP)\)
\begin{align}
    Z^c|_{[\psi]_\cP} = [2\pi\ii c\psi]_{\cP},\quad c\in\RR, [\psi]_{\cP}\in\cP,
\end{align}
where the equivalence class on the right-hand side refers to \eqref{eq:SimPOnTangent}.
Here, we identify the Lie algebra \(T_1\SS^1 = 2\pi\ii\RR\) with the real line \(\RR\) by the identification \(c\mapsto 2\pi\ii c\).  In particular, \(Z^1\in\Gamma(T\cP)\) is the \emph{unit vertical vector field} arising from the circle action.

\begin{proposition}\label{prop:ThetaOfZIsOne}
    \(\Th_\cP(Z^1)|_{[\psi]_{\cP}}=1\) for all \([\psi]_{\cP}\in\cP\).
\end{proposition}
\begin{proof}
    \(\Th_\cP(Z^1)|_{[\psi]_{\cP}} = \Th_\psi(2\pi\ii\psi) \mathrel{\overset{\text{\autoref{lem:value_of_Theta}}}{=}} {1\over 2\pi |M|}\int_{M}\Im(2\pi\ii)\mu = 1\).
\end{proof}

\begin{proposition}\label{prop:S1ActionFreeTransitive}
    The \(\SS^1\)-action \eqref{eq:S1ActionOnF} is free and transitive on each connected component of each fiber of \(\Pi_{\cP}\colon\cP\to\cO\).
    Moreover, \(\psi_0,\psi_1\in\Pi^{-1}\gamma\subset\cF\) lie in the same connected components of the fiber \(\Pi^{-1}\gamma\) if and only if \([\psi_0]_\cP, [\psi_1]_\cP\) lie in the same connected components of the fiber \(\Pi_\cP^{-1}\gamma\subset\cP\).
\end{proposition}
\begin{proof}
    The free-action property follows from the fact that the unit fundamental vector field \(Z^1\) is nowhere vanishing, ensured by \autoref{prop:ThetaOfZIsOne}.
    
    The remainder of the proof shows the transitivity of the action and the statement about connectivity of each fiber of \(\cP\).
    
    We first show that for any \(\psi_0, \psi_1 \in \Pi^{-1}\gamma\subset\cF\) in the same connected component of \(\Pi^{-1}\gamma\), \([\psi_1]_\cP\) is in the orbit of \([\psi_0]_\cP\) by the \(\SS^1\) action. Let $\{\psi_t\}_{t\in[0,1]}$ be a path joining $\psi_0$ and $\psi_1$. Define another path $\{\tilde\psi_t\}_{t\in[0,1]}$ by $\tilde\psi_t\coloneqq e^{-2\pi \ii c(t)} \psi_t  $ with $c(t)\coloneqq \int^t_0 \frac{d}{dt'}\Theta_{\psi_t'}(\partial_{t'} \psi_{t'}) dt'$. A direct computation shows $\partial_t \tilde\psi_t\in \ker \Theta_{\tilde \psi_t}$ for any $t\in[0,1]$, and hence $\psi_0\sim \tilde \psi_1$. On the other hand $\psi_1 = e^{2\pi c(1) \ii} \tilde \psi_1 $. Therefore, $e^{2\pi c(1) \ii}\rhd [\psi_0]_\cP= e^{2\pi c(1) \ii}\rhd [\tilde \psi_1]_\cP=[\psi_1]_\cP$.

    Therefore, the \(\SS^1\)-action is free and transitive on the projection of each connected component of the fiber of \(\cF_\cO\) in \(\cP\).  
    Thus, each of these projection images of fiber components of \(\cF_\cO\) is a circle in a fiber of \(\cP\).  
    These circles must be disjoint:  If \([\psi_0]_\cP, [\psi_1]_\cP\) lie in the same circle in the same fiber of \(\cP\), then there exists \(e^{2\pi c\ii}\in\SS^1\) so that \(\psi_1 = e^{2\pi c\ii}\psi_0\), which implies that \(\psi_0,\psi_1\) lies in the same \(\DC\)-orbit and thus in the same fiber component of \(\cF_\cO\).  Hence, each fiber \(\Pi_\cP^{-1}\gamma\) of \(\cP\) must consist of a disjoint union of circles, each of which is descended from a connected component of \(\Pi^{-1}\gamma\subset \cF_\cO\), and is freely and transitively acted by the \(\SS^1\) action \eqref{eq:S1ActionOnF}.
\end{proof}

Recall that each fiber \(\Pi^{-1}\gamma\) consists of connected components indexed by the discrete group \(H^1(M;\ZZ)\) (\autoref{prop:H1Z action on fiber}).  This and \autoref{prop:S1ActionFreeTransitive} implies that each fiber \(\Pi_\cP^{-1}\gamma\) of \(\cP\) also consists of connected components indexed by the same group \(H^1(M;\ZZ)\).  

We may further specify a group action of \( H^1(M; \ZZ) \) on \( \cP \) as a deck transformation between the connected components of each fiber using the map $\mathfrak h$ \eqref{eq:frakhDefinition}. Observe that $\mathfrak h$ induces an isomorphism
\begin{align}
    \frak h^*\colon C^\infty(M;\CC^\times)/\on{Exp}(C^\infty(M;\CC))\to H^1(M;\ZZ)
\end{align}
since $\frak h (e^{\varphi})(C)={1\over 2\pi\ii}\oint_{C}\frac{d e^\varphi}{e^\varphi} ={1\over 2\pi\ii}\oint_{C} d\varphi=0$ for any closed curve $C$ on $M$ due to the Stokes theorem.

Fix a basis $[\eta_1],\ldots,[\eta_\beta]$ for \(H^1(M;\ZZ)\), where $\beta$ is the first Betti number of $M$.  Then choose arbitrary representatives $\tau_i\in C^\infty(M;\CC^\times)$ of ${\frak h^*}^{-1}[\eta_i]$.
Using these functions \(\tau_i\), we define the action of $H^1(M;\ZZ)$ by
\begin{align}
    [\eta] \rhd [\psi]_\cP \coloneqq [\tau_1^{n_1}\cdots\tau_\beta^{n_\beta} \cdot \psi]_\cP, \quad \psi \in \cF, \ [\eta]= [n_1 \eta_1+\ldots + n_\beta \eta_\beta]\in H^1(M;\ZZ).
\end{align}

These $\SS^1$- and $H^1(M;\ZZ)$-actions together define a $G=\SS^1\times H^1(M;\ZZ)$-action
\begin{align}\label{eq:G_action}
    (e^{2\pi\ii c}, [\eta])\rhd [\psi]_\cP = [e^{2\pi\ii c} \tau_1^{n_1}\cdots\tau_\beta^{n_\beta}\psi]_\cP, \quad e^{2\pi\ii c}\in \SS^1, [\eta]\in H^1(M;\ZZ),
\end{align}
which is free, transitive, and fiber preserving. We thus obtain the following result.
\begin{proposition}\label{prop:principal_G_bundle}
The fibration $\Pi_\cP\colon \cP\to\cO$ equipped with the above action of $G=\SS^1\times H^1(M;\ZZ)$ is a principal $G$-bundle. 
\end{proposition}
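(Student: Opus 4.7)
The plan is to verify the defining axioms of a principal $G$-bundle for $\pi_\cP\colon \cP \to \cO$: the $G$-action on $\cP$ is well-defined, free, fiber-transitive, and locally trivializable.

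First I would verify well-definedness of the $G$-action on $\cP$. Multiplication by any $\kappa \in C^\infty(M,\CC^\times)$ (whether a constant phase $e^{ic}$ or a chosen representative $\tau_i^{c_i}$) preserves zero sets, hence respects the $\ker d\Pi$-component of $\sim_\cP$. To show that horizontal paths map to horizontal paths, I would compute directly: if $\partial_t \psi_t = -\cL_u \psi_t + \varphi \psi_t$, then $\partial_t(\kappa\psi_t) = -\cL_u(\kappa\psi_t) + (\varphi + \cL_u\kappa/\kappa)(\kappa\psi_t)$, and the extra contribution to $\Theta$ cancels using the additivity $\Lambda(\kappa\psi) = \Lambda(\kappa) + \Lambda(\psi)$ on the complement of the zero set together with the identity $\iota_u\lambda_\kappa = \Im(\cL_u\kappa/\kappa)$, yielding $\Theta_{\kappa\psi_t}(\partial_t(\kappa\psi_t)) = \Theta_{\psi_t}(\partial_t\psi_t)$.

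Next I would establish freeness and fiber-wise transitivity simultaneously. The $H^1_{dR}(M,\ZZ)$-factor acts freely because \autoref{prop:cardinality_of_fiber_components} places representatives of distinct classes in distinct connected components of $\Pi^{-1}\gamma$, and $\sim_\cP$ cannot cross components as it uses only paths inside a single $\SemiD$-orbit. For the $\SS^1$-factor, the orbit direction $i\psi$ satisfies $\Theta(i\psi) = \Vol(M)/(2\pi) \neq 0$, so the orbit is transverse to the horizontal distribution and projects to a locally injective circle in $\cP$. A hypothetical fractional collapse $[e^{ic}\psi]_\cP = [\psi]_\cP$ with $c \in (0,2\pi)$ would, via concatenation of a witness horizontal path with the orbit arc, yield a loop in a single fiber of $\Pi$ whose $\Theta$-integral equals $c\,\Vol(M)/(2\pi)$, contradicting the fact that the primitive period of the generating $\SS^1$-loop in that fiber is $\Vol(M)$. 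For fiber-transitivity, \autoref{prop:cardinality_of_fiber_components} provides $\SemiD_\gamma$-transitivity within each component, the $H^1_{dR}(M,\ZZ)$-action transports between components, and modulo $\sim_\cP$ the one-dimensional quotient $\ker d\Pi|_\psi / (\ker d\Pi|_\psi \cap \ker\Theta|_\psi)$ is realized precisely by the $\SS^1$-orbit tangent $i\psi$.

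For local triviality, I would exploit transitivity of $\Diff_0(M)$ on $\cO$ to choose a smooth family $\{f_\gamma\}_{\gamma\in U} \subset \Diff_0(M)$ over a small neighborhood $U \subset \cO$ of a fixed $\gamma_0$, with $f_{\gamma_0}=\id_M$ and $f_\gamma \rhd \gamma_0 = \gamma$. Fixing $\psi_0 \in \Pi^{-1}\gamma_0$ and setting $s(\gamma) = [\psi_0 \circ f_\gamma^{-1}]_\cP$ produces a local section, and the map $(\gamma, g) \mapsto g \rhd s(\gamma)$ yields a smooth $G$-equivariant trivialization $U \times G \to \pi_\cP^{-1}(U)$. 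The main obstacle is the freeness of the $\SS^1$-part: this rests on controlling $\Theta$-periods of loops in the infinite-dimensional fibers of $\Pi$, specifically verifying that the primitive period equals $\Vol(M)$ rather than a proper fraction thereof. A complete argument requires analyzing the topology of each connected component of $\Pi^{-1}\gamma$ in $\cF_\cO$, for which the $\SemiD_\gamma$-action provides a useful handle via \autoref{prop:cardinality_of_fiber_components}.
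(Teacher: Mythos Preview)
The paper does not give a separate proof of this proposition: the sentence immediately preceding it (``which is free, transverse, and fiber preserving. We thus obtain the following result'') is the entire justification. Your proposal is therefore considerably more detailed than what the paper offers, and is essentially correct in outline, but two points deserve comment.

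First, your well-definedness argument takes an unnecessary detour through the circle differential $\Lambda$. The invariance $\Theta_{\kappa\psi}(\kappa\dot\psi)=\Theta_{\psi}(\dot\psi)$ for any $\kappa\in C^\infty(M,\CC^\times)$ follows in one line from the defining formula
\[
\Theta_{\kappa\psi}(\kappa\dot\psi)=\oneOverTwoPi\int_M \Im\frac{\kappa\dot\psi\,\overline{\kappa\psi}}{|\kappa\psi|^2}\,\mu
=\oneOverTwoPi\int_M \Im\frac{|\kappa|^2\,\dot\psi\,\bar\psi}{|\kappa|^2\,|\psi|^2}\,\mu
=\Theta_\psi(\dot\psi),
\]
which is exactly the computation the paper uses later for equivariance in \autoref{prop:connection_form}. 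No rewriting of $\partial_t(\kappa\psi_t)$ in $(u,\varphi)$-form is needed.

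Second, you are right to flag freeness of the $\SS^1$-factor as the genuine subtlety: one must rule out $[e^{ic}\psi]_\cP=[\psi]_\cP$ for $c\in(0,2\pi)$, which amounts to showing that the $\Theta$-period of every loop in a connected component of $\Pi^{-1}\gamma$ lies in $\Vol(M)\cdot\ZZ$. The paper asserts this without argument (``The $\SS^1$-action is free and transverse within each connected component''), so you have identified a point that is glossed over in the paper as well. Your local-triviality argument via a $\Diff_0(M)$-family of local sections is standard and correct; the paper omits it entirely.
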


Furthermore, $\Th_\cP$ defines a connection form on this principal $G$-bundle:
\begin{proposition}\label{prop:connection_form}
     The 1-form $\Th_\cP$  is a connection form on the principal $G$-bundle $\Pi_\cP\colon\cP\to \cO$. That is, $\Th_\cP$ satisfies the following two properties:
\begin{enumerate}
    \item Equivariance under the $G$-action $\Phi_g\colon \cP\to \cP$; \ie, $\Phi_g^*\Th_\cP=\Th_\cP$ for every $g\in G$.
    \item Vertical reproducibility; \ie, $\Th_\cP(Z^c)=c$ for any $c\in\RR\cong \frak{g}$ and its fundamental vector field $Z^c\in \Gamma(T\cP)$.%

\end{enumerate}
\end{proposition}

\begin{proof}
The vertical reproducibility follows from \autoref{prop:ThetaOfZIsOne}.  
The equivariance can be verified by the following straightforward calculation.
Let $g=(e^{2\pi\ii c},[\sum_i n_i\eta_i])\in G=\SS^1\times H^1(M;\ZZ)$, and denote by $\Phi_g$ the $g$-action \eqref{eq:G_action}. Then,
\begin{align}
    \Th_\cP|_{\Phi_g([\psi]_\cP)}([\Phi_{g*}\dot\psi]_\cP)
    &={1\over 2\pi|M|}\int_M \Im \frac{\dot \psi e^{2\pi\ii c}\tau_1^{n_1}\cdots \tau_\beta^{n_\beta} \cdot \overline{\psi e^{2\pi\ii c}\tau_1^{n_1}\cdots \tau_n^{n_\beta}}}{|\psi e^{2\pi\ii c}\tau_1^{n_1}\cdots \tau_\beta^{n_\beta}|^2} \ \mu 
    \\&= {1\over 2\pi|M|} \int_M \Im \frac{\dot \psi  \bar\psi }{|\psi |^2} \ \mu 
    =\Th_\cP|_{[\psi]_\cP}([\dot \psi]_\cP),
\end{align}
where $\psi\in \cF$ and $\dot \psi\in T_\psi \cF$ are any representatives of $[\psi]_\cP$ and $[\dot\psi]_\cP$ respectively.
\end{proof}

Combining the results that $\Pi_\cP\colon(\cP, \Th_\cP)\to(\cO,\omega^{\rm MW})$ is a formal prequantization (\autoref{th:quantum_potetial}) and a principal $G$-bundle with $G=\SS^1 \times H^1(M;\ZZ)$ (\autoref{prop:principal_G_bundle}), equipped with a connection form $\Th_\cP$ (\autoref{prop:connection_form}), we obtain our main theorem:

\begin{theorem}\label{th:generalized_prequantum_bundle}
The volume bundle $\Pi_\cP\colon (\cP,\Th_\cP)\to (\cO, \omega^{\rm MW})$ is a generalized prequantum $G$-bundle with $G = \SS^1 \times H^1(M;\ZZ)$. 
\end{theorem}

\begin{corollary}\label{cor:prequantum_circle_bundle}
    For simply-connected \(M\), the volumne bundle is a prequantum circle bundle.
\end{corollary}

\subsection{Interpretation of the Volume Bundle Prequantization}

The prequantization \(\Pi_{\cP}(\cP,\Theta_\cP)\to(\cO,\omega^{\rm MW})\) for the MW structure allows for geometric interpretation of the MW form as the curvature of a bundle.  The evaluation of the curvature can be given in terms of the holonomy of horizontal lifts. 

Using the connection form $\Theta_\cP$, each tangent space $T_{[\psi]_\cP}\cP$  splits into the vertical distribution $V_{[\psi]_\cP}\cP=\ker d\Pi_\cP|_{[\psi]_\cP}$ and the horizontal distribution $H_{[\psi]_\cP}\cP=\ker\Th_\cP|_{[\psi]_\cP}$, and one can consider the unique horizontal lift on $\cP$ of a path in $\cO$ (\autoref{def:unique_horizontal_lift}). 


In light of \autoref{rmk:geometric_meaning_of_Th}, this horizontal lift can be interpreted as the evolution of the implicit representation $[\psi]_\cP$ such that the average swept volume of phase level hypersurfaces remains zero at all times.
That is, for a representative $\psi_t$ of a horizontal lift $[\psi_t]_\cP$, we have for the family of phase hypersurfaces \(\{\sigma_t^s\}_{s\in \SS^1}\) defined by $\sigma_t^s = \phi_t^{-1}(s)$,
\begin{align}
    \int_{\SS^1}\int_{\sigma_t^s}\iota_{\partial_t{\sigma}_t^s} \mu \ ds = 0
\end{align}
at each $t$ where $\partial_t \sigma_t^s$ is the velocity defined on each hypersurface $\sigma_t^s$. 

This reveals a geometric interpretation of the MW form as the curvature form of $\Th_\cP$, measuring the holonomy induced by parallel transport on $\cP$ over a closed path in $\cO$:

\begin{corollary}[Average swept volume]\label{cor:average_swept_volume}
Consider a closed path \(\partial\Sigma\) in \(\cO\) that bounds a 2-dimensional disk \(\Sigma\), representing a cyclic motion of a codimension-2 submanifold \(\gamma_t \subset M\) for \(0 \leq t \leq 1\), with \(\gamma_0 = \gamma_1\). Let $[\psi_t]_\cP$ be a horizontal lift in \(\cP\) of $\gamma_t$ and $\{\psi_t\}_t$ be representatives taken continuously in $t$. Suppose 
that $\psi_0$ and $\psi_1$ lie in the same connected component of a fiber.

Then the volume enclosed between phase hypersurface \(\sigma_0^s=\phi^{-1}_0(s)\) and \(\sigma_1^s=\phi^{-1}_1(s)\), averaged over \(s\in \SS^1\), equals to \(|M|\iint_{\Sigma} \omega^{\rm MW}\).
\end{corollary}

As illustrated in \autoref{fig:DC_orbit_U}, we may compress the phase field so that it is non-constant only within narrow bands.
By considering a limiting case of \autoref{cor:average_swept_volume}, where  $\arg \psi_t\coloneqq M\setminus\psi^{-1}(0)\to \RR/2\pi \ZZ$ becomes constant except a \(2\pi\) jump on a single hypersurface \(\sigma_t\) bounding \(\gamma_t\), we obtain the following:

\begin{corollary}[Swept volume by a hypersurface] \label{cor:single_surface_swept_volume}
Let \(\Sigma \subset \cO\) and \(\{\gamma_t\}_{t \in [0,1]} \subset M\) be as in \autoref{cor:average_swept_volume}. Suppose that each \(\gamma_t\) bounds a hypersurface, i.e., \(\gamma_t = \partial \sigma_t\), and that the  volume swept out by \(\sigma_t\) remains zero at each \(t\), meaning
\(\int_{\sigma_t}\iota_{\partial_t {\sigma}_t} \mu = 0\).
Then, the volume enclosed between \(\sigma_0\) and \(\sigma_1\) is given by \(|M|\iint_{\Sigma} \omega^{\rm MW}\).
\end{corollary}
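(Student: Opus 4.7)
The plan is to derive the statement from \autoref{cor:average_swept_volume} by an approximation argument: I would construct smooth implicit representations of $\gamma_t$ whose phase level sets all collapse onto $\sigma_t$, so that the averaged quantities appearing in \autoref{cor:average_swept_volume} reduce to the single-surface quantities in the limit.

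For each $\epsilon > 0$, I would build $\psi_t^\epsilon \in \cF$ representing $\gamma_t$ whose phase $\phi_t^\epsilon = \psi_t^\epsilon / |\psi_t^\epsilon|$ realizes a smoothed $2\pi$-jump across $\sigma_t$, supported in a tubular $\epsilon$-neighborhood $N_t^\epsilon$ of $\sigma_t$. Near $\gamma_t$, the phase must match the standard local model $\psi \sim z$ to give the prescribed zero structure and orientation, which I would confine to a collar of width $\delta(\epsilon) \to 0$. Each level set $\sigma_t^{s,\epsilon} \coloneqq \phi_t^{\epsilon,-1}(s)$ then lies inside $N_t^\epsilon$ outside the collar and hence converges, as currents, to $\sigma_t$ uniformly in $s \in \SS^1$. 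By \autoref{cor:Liouville_form_flux}, the obstruction to horizontality is
\[
\Th(\partial_t\psi_t^\epsilon) \;=\; -\oneOverTwoPi\int_{\SS^1}\int_{\sigma_t^{s,\epsilon}}\iota_{\partial_t\sigma_t^{s,\epsilon}}\mu\,ds,
\]
which converges to $-\int_{\sigma_t}\iota_{\partial_t\sigma_t}\mu = 0$ by the hypothesis and continuity of the swept-volume functional under the above convergence.

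Next, I would horizontalize by a time-dependent uniform phase: set $\hat\psi_t^\epsilon \coloneqq e^{ic_t^\epsilon}\psi_t^\epsilon$ with $c_0^\epsilon = 0$ and $\dot c_t^\epsilon \coloneqq -\tfrac{2\pi}{\Vol(M)}\Th(\partial_t\psi_t^\epsilon)$. By \autoref{lem:evaluation_of_Theta} together with the identity $\Th(i\xi\psi) = \xi\Vol(M)/(2\pi)$ established in the proof of \autoref{prop:connection_form}, this choice forces $\Th(\partial_t\hat\psi_t^\epsilon) = 0$, so $\hat\psi_t^\epsilon$ is a horizontal lift of $\gamma_t$, and the previous step yields $c_1^\epsilon \to 0$ as $\epsilon \to 0$. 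Applying \autoref{cor:average_swept_volume} to $\hat\psi_t^\epsilon$ gives $\oneOverTwoPi\int_{\SS^1}\Vol(\hat\sigma_0^{s,\epsilon},\hat\sigma_1^{s,\epsilon})\,ds = \iint_\Sigma\omega$; since $\hat\sigma_0^{s,\epsilon} = \sigma_0^{s,\epsilon}$ and $\hat\sigma_1^{s,\epsilon} = \sigma_1^{s - c_1^\epsilon,\epsilon}$ both converge to $\sigma_j$ uniformly in $s$, the left-hand side tends to $\Vol(\sigma_0,\sigma_1)$, yielding the claim.

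The main obstacle will be handling the collar region around $\gamma_t$: the standard local model $\psi \sim z$ forces the phase level sets to contain radial ``rays'' emanating from $\gamma_t$ that do not concentrate onto $\sigma_t$. One must choose $\delta(\epsilon)$ to shrink fast enough that the contribution of these rays to both the swept-volume integral (guaranteeing the horizontality correction $c_1^\epsilon$ vanishes in the limit) and the enclosed-volume integral (guaranteeing convergence to $\Vol(\sigma_0,\sigma_1)$) is $o(1)$, while keeping each $\psi_t^\epsilon$ smooth and confining the family to the correct connected component of the fiber bundle so that the horizontal lift $\hat\psi_t^\epsilon$ is globally well-defined over the entire loop.
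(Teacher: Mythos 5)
Your proposal is correct and takes essentially the same route as the paper, which obtains this corollary precisely as the limiting case of \autoref{cor:average_swept_volume} in which the phase of \(\psi_t\) degenerates to a \(2\pi\) jump concentrated on the single hypersurface \(\sigma_t\) --- exactly the \(\epsilon\)-concentration you make explicit. The paper leaves this limit informal (offering only, in a subsequent remark, a sketched direct alternative via the bundle of hypersurfaces over \(\cO\)), so your regularization together with the horizontalization by a uniform phase shift is a reasonable way of supplying the missing details.
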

Note that the interpretation of the MW form in \autoref{cor:single_surface_swept_volume} reduces to the swept volume of a single surface, no longer explicitly involving the complex function.
\begin{remark}
   \autoref{cor:single_surface_swept_volume} can also be shown directly in the framework for the explicit shape space (\autoref{sec:explicit symplectic potential}). Let us take an \(m{-}1\) dimensional submanifold \(\Sigma_0\) of \(M\) bounded by some \(\gamma_0 \in \cO\), and consider the orbit \(\cS \coloneqq \Diff_0(M) \rhd \Sigma_0\), where the action is defined by \(f \rhd \Sigma = f \circ \Sigma\) for \(f \in \Diff_0(M)\) and \(\Sigma \in \cS\). Then \(\pi \colon \cS \to \cO\) is a fiber bundle where the fibration is given by the boundary operator, i.e., \(\pi(\Sigma) = \partial \Sigma\), and the tangent space at each \(\Sigma\) is \(T_\Sigma \cS = \{u \circ \Sigma \mid u \in \diff(M)\}\).

   Define a 1-form \(\eta\) on \(\cS\) by \(\eta_\Sigma(u \circ \Sigma) = \int_\Sigma \iota_u\mu\), which measures the flux of $u$ through $\Sigma$. Then \(\eta\) serves as a formal prequantization, i.e., \(d\eta = \pi^*\omega^{\rm MW}\), which can be shown in a manner similar to the proof of \autoref{thm:SymplecticPotentialForExactVolumeForm}. For a path \(\{\gamma_t\} \subset \cO\), there exist infinitely many lifts $\{\ell_t\}\subset \cS$ such that $\partial_t\ell_t\in\ker \eta$ for all $t$, but the notion of no swept volume still makes sense, and we recover \autoref{cor:single_surface_swept_volume}.
\end{remark}

\bibliographystyle{amsalpha}
\bibliography{reference}
\end{document}